\numberwithin{equation}{section}
\numberwithin{figure}{section}
\newtheorem{thm}{Theorem}[section]
\newtheorem*{thm*}{Theorem}
\newtheorem*{lem*}{Lemma}
\newtheorem{lem}[thm]{Lemma}
\newtheorem*{cor*}{Corollary}
\newtheorem{cor}[thm]{Corollary}
\newtheorem*{proposition*}{Proposition}
\newtheorem{proposition}[thm]{Proposition}
\theoremstyle{definition}
\newtheorem*{definition*}{Definition}
\newtheorem*{rem*}{Remark}
\newtheorem{rem}[thm]{Remark}
\newtheorem{cdt}[thm]{Condition}
\def\e#1\e{\begin{equation}#1\end{equation}}
\def\iz#1\iz{\begin{itemize}#1\end{itemize}}
\def\ea#1\ea{\begin{align}#1\end{align}}
\def\eq{\eqref}
\def\l{\label}
\def\0{\hspace{0pt}}
\def\sq{\sqrt}
\def\cyl{\rm{cyl}}
\def\Xh{\widehat X}
\def\Kh{\widehat K}
\def\etati{\widetilde{\eta}}
\def\lg{\langle}
\def\rg{\rangle}
\def\dim{\mathop{\rm dim}\nolimits}
\def\Ker{\mathop{\rm Ker}}
\def\ge{\geqslant}
\def\le{\leqslant\nobreak}
\def\cC{{\mathbin{\cal C}}}
\def\cE{{\mathbin{\cal E}}}
\def\cH{{\mathbin{\cal H}}}
\def\cM{{\mathbin{\cal M}}}
\def\cN{{\mathbin{\cal N}}}
\def\oM{{\mathbin{\smash{\,\,\ov{\!\!\mathcal
M\!}\,}}}}
\def\cT{{\mathbin{\cal T}}}
\def\cU{{\mathbin{\cal U}}}
\def\cV{{\mathbin{\cal V}}}
\def\={\equiv}
\def\cX{{\mathbin{\cal X}}}
\def\cY{{\mathbin{\cal Y}}}
\def\C{{\mathbin{\mathbb C}}}
\def\R{{\mathbin{\mathbb R}}}
\def\al{\alpha}
\def\be{\beta}
\def\ga{\gamma}
\def\de{\delta}
\def\ep{\epsilon}
\def\th{\theta}
\def\si{\sigma}
\def\om{\omega}
\def\Si{\Sigma}
\def\Th{\Theta}
\def\Om{\Omega}
\def\Ga{\Gamma}
\def\d{\partial}
\def\ts{\textstyle}
\def\-{\setminus}
\def\bu{\bullet}
\def\op{\oplus}
\def\ov{\overline}
\def\iy{\infty}
\def\t{\times}
\def\ci{\circ}
\def\ha{{\frac{1}{2}}}
\def\su{\subset}
\DeclareMathOperator{\grad}{grad}
\DeclareMathOperator{\gr}{Graph}
\DeclareMathOperator{\di}{div}
\DeclareMathOperator{\area}{area}
\DeclareMathOperator{\re}{Re}
\DeclareMathOperator{\im}{Im}
\DeclareMathOperator{\spt}{Spt}
\title{Surjectivity of a Gluing Construction in Special Lagrangian Geometry}
\author{Yohsuke Imagi}
\date{Kavli IPMU (WPI)}
\begin{document}
\maketitle
\textbf{Abstract.}
This paper is motivated by a relatively recent work by Joyce \cite{J1,J2,J3,J4,J5} in special Lagrangian geometry, but the basic idea of the present paper goes back to an earlier pioneering work of Donaldson \cite{D} (explained also by Freed and Uhlenbeck \cite{FU}) in Yang--Mills gauge theory; Donaldson discovered a global structure of a (compactified) moduli space of Yang--Mills instantons, and a key step to that result was the proof of surjectivity of Taubes' gluing construction \cite{Taubes}.

In special Lagrangian geometry we have currently no such a global understanding of (compactified) moduli spaces, but in the present paper we determine a neighbourhood of a `boundary' point.
It is locally similar to Donaldson's result, and in particular as Donaldson's result implies the surjectivity of Taubes' gluing construction so our result implies the surjectivity of Joyce's gluing construction in a certain simple case.

\section{Introduction}\l{intro}
The main result of the present paper may be stated briefly as the surjectivity of Joyce's gluing construction in a certain simple case, and we begin therefore with a review of Joyce's work \cite{J1,J2,J3,J4,J5}.

Let $M$ be a Calabi--Yau (or more generally almost\footnote{
This is a terminology of Joyce \cite{J1,J2,J3,J4,J5} which means that $M$ need {\it not} be Ricci-flat.
}Calabi--Yau) manifold of complex dimension $m$,
and let $X$ be a compact special Lagrangian $m$-fold in $M$ with finitely many singular points $x,y,\cdots,z\in X$ modelled on multiplicity-one special Lagrangian cones $C_x,C_y,\cdots,C_z\su\C^m$ with isolated singularity.

Joyce \cite{J3,J4} studied a {\it smoothing}\footnote{
Joyce \cite{J3,J4} calls it a {\it desingularization} but in algebraic geometry it means a {\it resolution} of singularity which does not fit our context, and we shall therefore call it a smoothing.
}of $X$ by the gluing technique, which may be sketched as follows:
as local smoothing models for $C_x,C_y,\cdots,C_z$ let $L_x,L_y,\cdots,L_z$ be non-compact special Lagrangian submanifolds properly-{\0}embedded in $\C^m$ and asymptotic at infinity to $C_x,C_y,\cdots,C_z$ with multiplicity $1$ respectively;
for each $t>0$ let $tL_x:=\{t\bm z\in\C^m:\bm z\in L_x\}$ and define $tL_y,\cdots,tL_z$ likewise;
then under some hypotheses we can glue $tL_x,tL_y,\cdots,tL_z$ to $X$ at $x,y,\cdots,z$ respectively into a family of compact special Lagrangian submanifolds $N_t$ of $M$ with small $t>0$ and tending to $X$ as $t\to+0$ as {\it varifolds}\footnote{
Another well-known notion in geometric measure theory are {\it currents} and there is some difference between varifolds and currents, but the difference will not matter in special Lagrangian geometry (or more generally {\it calibrated} geometry) as we shall explain in \S\ref{SL section} below.
}, which are a notion of `singular' submanifolds in geometric measure theory.

Consider now the space $\cV$ consisting of all compactly-supported special Lagrangian integral varifolds with no boundary in $M$, so that $X\in\cV$,
where $X$ has isolated singular points and multiplicity-one tangent cones but general elements of $\cV$ may have {\it non-isolated} singularity and {\it higher-multiplicity} tangent cones.

We wish to detemine a {\it neighbourhood} of $X$ in $\cV$. For some simple $X$ indeed we can determine a neighbourhood of $X$ in $\cV$ in a way similar to Donaldson's work (explained also by Freed and Uhlenbeck \cite{FU}) in Yang--Mills gauge theory.

Donaldson compactifies a moduli space $\cM$ of Yang--Mills instantons by adding some objects with {\it isolated} singularity (by Uhlenbeck's theorem \cite{Uh}) and those singular objects form the boundary $\d\cM$ of the compactified moduli space $\oM=\cM\cup\d\cM$.

Donaldson determines indeed a neighbourhood $\cN$ of $\d\cM$ in $\oM$ and its proof is based on the bubbling-off (or blowing-up) analysis using a technique of Uhlenbeck \cite{Uh} and on the classification of local models by Atiyah, Hitchin and Singer~\cite{AHS}.

In our situation the basic tool necessary for the blowing-up analysis is already prepared in the preceding paper \cite{Imagi}, which will be explained in \S\ref{bu} below. On the other hand the classification of local models $L_x,L_y,\cdots,L_z$ will be difficult in general.
Consider therefore the simple case where $m=3$ and the cones $C_x,C_y,\cdots,C_z$ are all equal to
\begin{equation}\l{C}
C:=\{(z_1,z_2,z_3)\in\C^3\-\{0\}
:|z_1|=|z_2|=|z_3|,z_1z_2z_3\in(0,\iy)\}
\end{equation}
which is a special Lagrangian cone in $\C^3$ discovered by Harvey and Lawson~\cite[Chapter~III.3.A, Theorem~3.1]{HL}. It is also {\it stable} in the sense of Joyce \cite[\S3.2]{J2}.
In the unit sphere $S^5:=\{(z_1,z_2,z_3)\in\C^3:|z_1|^2+|z_2|^2+|z_3|^2=1\}$
it is easy to see that $C\cap S^5$ is diffeomorphic to $T^2$ and so $C$ is a $T^2$-cone.
Haskins~\cite{Haskins} proves that $C$ is, as a stable $T^2$-cone, unique up to $SU_3$-rotation.

To state our classification theorem we also define:
\ea
\l{L1}L_1:=\{(z_1,z_2,z_3)\in\C^3:|z_1|^2-1=|z_2|^2=|z_3|^2,
z_1z_2z_3\in[0,\iy)\},\\
L_2:=\{(z_1,z_2,z_3)\in\C^3:|z_1|^2=|z_2|^2-1=|z_3|^2,
z_1z_2z_3\in[0,\iy)\},\\
\l{L3}L_3:=\{(z_1,z_2,z_3)\in\C^3:|z_1|^2=|z_2|^2=|z_3|^2-1,
z_1z_2z_3\in[0,\iy)\},
\ea
which are all non-compact special Lagrangian submanifolds properly-embedded in $\C^3$ and asymptotic at infinity to $C$ with multiplicity $1$.
In particular $L_1,L_2,L_3$ are all {\it non-singular}, despite of the condition $z_1z_2z_3\in[0,\iy)$;
for instance the map $S^1\t\C\to L_1\su\C^3$ given by $(e^{i\th},z)\mapsto(e^{i\th}\sq{|z|^2+1},z,e^{-i\th}\bar z)$ is a diffeomorphism;
permuting the co-ordinates of $\C^3$ we also get diffeomorphisms onto $L_2$ and $L_3$.

Our classification theorem is:
\begin{thm}\l{uniq}
Let $W$ be a special Lagrangian varifold in $\C^3$ with no boundary asymptotic at infinity to $C$ with multiplicity $1$.
Then $W$ is a mutiplicity-one varifold represented by $C$ or $sL+b$ for some $L\in\{L_1,L_2,L_3\},$ $t>0$ and $b\in\C^3$ where $sL+b:=\{t\bm z+b\in\C^3:\bm z\in L\}$.
\end{thm}

The proof of Theorem \ref{uniq} will be given in \S\ref{un} and at the moment we only point out that the proof uses a {\it symmetry} of $C$.


We return now to the special Lagrangian geometry in the (almost) Calabi--Yau manifold $M$.
Let $X$ be a compact special Lagrangian $3$-fold $X$ with {\it only one} singular point $x$ modelled on $C$ so that we can apply Theorem \ref{uniq} above.
One may also consider of course two or more singular points in which case however one has to consider their {\it interaction} as Joyce does \cite[\S10.3]{J5}---we shall not discuss it in the present paper.

To state our main results we also introduce the subspace $\cX\su\cV$ consisting of those $Y$ which are singular only at one point and modelled on $C$ with multiplicity $1$, so that $X\in\cX$.
Our main results may then be summarized briefly into the following single statement:
\begin{thm}\l{main1}
There exists a neighbourhood $\cU$ of $X$ in $\cV$ such that any element of $\cU\-\cX$ may be obtained by Joyce's gluing construction.
\end{thm}

Here $\cU\-\cX$ may be {\it empty}, in which case we have $\cU\su\cX$ and so $X$ is {\it unsmoothable}.

We shall state a more precise meaning of `Joyce's gluing construction' in Theorem \ref{main1}.
As in \eq{L1}--\eq{L3} we have three local models $L_1,L_2,L_3$ and so there are apparently three distinct ways of smoothing $X$ but in fact there is {\it at most one}\footnote{
There will be possibly more than one way of smoothing $X$ if one considers a family of (almost) Calabi--Yau manifolds as Joyce does \cite[\S10.2]{J5}.
}way, which may be explained as follows.

For any $L\in\{L_1,L_2,L_3\}$ we can certainly glue $L$ to $X$ at $x$ into a compact submanifold of $M$
but to make it {\it Lagrangian} in $M$ we need a topological condition between $L$ and $X$ given by Joyce \cite[Theorem 10.4 (see also Theorem 7.3)]{J5}.
There may be no $L\in\{L_1,L_2,L_3\}$ for which $L$ and $X$ satisfy the topological condition, which will be recalled in \S\ref{cd} below.
Joyce \cite[Proposition 10.3]{J1} proves indeed:
\begin{lem}\l{j}
{\rm
There is {\it at most one} $L\in\{L_1,L_2,L_3\}$ for which $L$ and $X$ satisfy the topological condition.
}
\end{lem}

Theorem \ref{main1} includes the following statement:
\begin{thm}\l{main2}
If there is no $L\in\{L_1,L_2,L_3\}$ for which $L$ and $X$ satisfy the topological condition of Joyce {\rm\cite[Theorem 10.4]{J5}} then there exists a neighbourhood of $X$ in $\cV$ contained in $\cX$.
\end{thm}

It is easy to see that that if there is $L\in\{L_1,L_2,L_3\}$ for which $L$ and $X$ satisfy the topological condition, then so do $L$ and $Y$, where $Y$ is not exactly $X$ but in a {\it neighbourhood} $\cY$ of $X$ in $\cX$.
Consequently $L$ and $Y$ may be glued together into a compact special Lagrangian submanifold of $M$.

More precisely we re-scale $L$ by small $t>0$ and glue $tL$ to $X$ at $x$.
Making $\cY$ smaller if necessary we can indeed find a real number $\tau>0$ and define a continuous map $G:[0,\tau)\t\cY\to\cV$ with the following two properties:
\iz
\item[(i)] $G(0,Y)=Y$ for all $Y\in\cY$;
\item[(ii)] if $(t,Y)\in(0,\tau)\t\cY$ then $G(t,Y)$ is a compact special Lagrangian submanifold of $M$ obtained by gluing $Y$ and $tL$ together.
\iz
Theorem \ref{main1} may be then refined as follows:
\begin{thm}\l{main3}
{\rm
If there is $L\in\{L_1,L_2,L_3\}$ for which $L$ and $X$ satisfy the topological condition of Joyce {\rm\cite[Theorem 10.4]{J5}} then $G:[0,\tau)\t\cY\to\cV$ is a {\it homeomorphism onto a neighbourhood} $\cU$ of $X$ in $\cV$ with $\cU\cap\cX=\cY$.
}
\end{thm}

Thus $\cU$ is a {\it collar} neighbourhood of $\cY$ in $\cV$ and so locally similar to Donaldson's situation \cite{D} which also contains a collar neighbourhood of the boundary $\d\cM$ of the compactified moduli space $\oM$ in the notation above.

We also note that $\cY$ and $\cU\-\cY$ are {\it manifolds} of finite dimension, and that $G$ maps $(0,\tau)\t\cY$ {\it diffeomorphically} onto $\cU\-\cY$.
These facts may be proven in the following three steps:
\iz
\item[(i)]
As $\cU\-\cY$ consists of compact special Lagrangian (non-singular) submanifolds of $M$ it follows from McLean's theorem \cite[Theorem 3.6]{M} that $\cU\-\cY$ is a manifold but with respect to the $C^\iy$-topology.
By Allard's regularity theorem \cite[Theorem 8.19]{Allard} however the $C^\iy$-topology on $\cU\-\cY$ is equal to the varifold topology induced from $\cV$.
\item[(ii)]
Joyce \cite[Corollary 6.11]{J2} extended McLean's theorem to the {\it stable-cone} singularity and as $C$ is stable (see \cite[\S3.2]{J2}) we can apply Joyce's theorem so that $\cY$ will be a manifold with respect to a strong topology (given by Joyce \cite[Definition 5.6]{J2}). It is again equal to the varifold topology, which we prove in Theorem \ref{same} below.
\item[(iii)]
The fact that $G$ maps $(0,\tau)\t\cY$ diffeomorphically onto $\cU\-\cY$ is already observed by Joyce (see a discussion after \cite[Definition 8.9]{J5});
there are in fact natural co-ordinate systems on $(0,\tau)\t\cY$ and $\cU\-\cY$ with respect to which $G$ is a {\it product} map.
\iz

Thus Theorems \ref{main2} and \ref{main3} are the precise meaning of Theorem \ref{main1}.
Their proof will be given in \S\S\ref{bg1}--\ref{pr3}, the last three sections of the present paper.
It will be again similar to the corresponding part of Donaldson's proof \cite[III.4]{D} (see also Freed and Uhlenbeck \cite[\S9, Connectivity of the Collar]{FU}).

We point out that Theorem \ref{main3} implies the {\it connectivity} of a neighbourhood of $X$ in $\cV$, which part requires a careful treatment in particular. It is also the case in Donaldson's situation.

The remaining sections may be summarized as follows.
In \S\ref{SL section} we give a review of geometric measure theory and {\it calibrated} geometry including special Lagrangian geometry.
There are two notions of `singular' submanifolds in geometric measure theory,
varifolds and currents, and there is some difference between them in general, but it does not matter in calibrated geometry as we shall explain in \S\ref{SL section}.

We shall mainly use varifolds rather than currents so that we can directly use Allard's regularity theorem \cite[Theorem 8.19]{Allard} which for instance we have used above in the proof that the two topologies on $\cU\-\cY$, the $C^\iy$-one and the varifold one, are the same.

In \S\ref{bu} we analyse the blowing-up of special Lagrangian varifolds in a general situation, concerning a multiplicity-one special Lagrangian {\it Jacobi-integrable} cone with isolated singularity, which fits Joyce's framework \cite[Definition 6.7]{J5}.

In \S\ref{un} we prove Theorem \ref{uniq} as we have mentioned above.

Some material of \S\S\ref{SL section}--\ref{un} is not directly relevant to the main results (Theorems \ref{main2} and \ref{main3}) but may be of independent interest and of potential use in more general situations.

\subsection*{Acknowledgements}
I would like to thank Kenji Fukaya,
Manabu Akaho, Mark Haskins,
Lorenzo Foscolo, Yoshihiro Tonegawa
and the referees
for many useful comments.
I was supported in part by a JSPS research fellowship 22-699
and a GCOE programme at Kyoto University.
Part of this paper was written during my visit to Imperial College London.

\section{Calibrated Geometry and Geometric Measure Theory}
\l{SL section}
In this section we shall give a review of
special Lagrangian geometry (a kind of calibrated geometry) and geometric measure theory.
First of all we shall define almost Calabi--Yau manifolds and
their special Lagrangian submanifolds.

Let $(M,\om)$ be a symplectic manifold of dimension $2m$,
and let $J$ be a complex structure on $M$
such that if we put
$\hat{g}(v,w)=\om(v,Jw)$
then $\hat{g}$ will be a K\"ahler metric on $(M,J)$.
Let $\Om$ be a holomorphic $(m,0)$-form on $(M,J)$
with $\Om|_x\neq0$ for every $x\in M$.
Then we shall call $(M,\om,J,\Om)$ an almost Calabi--Yau manifold,
and $(\om,J,\Om)$ an almost Calabi--Yau structure on $M$.
We can define a smooth function $\psi:M\to(0,\iy)$ such that
\begin{equation}\l{psi}
\frac{\psi^{2m}}{m!}\om^{\wedge m}=
(-1)^s\left(\frac{i}{2}\right)^m
\Om\wedge\ov{\Om}
\text{ where }s=\frac{m(m-1)}{2}.
\end{equation}
We put $g=\psi^2\hat{g}$.
We shall call $g$ the almost Calabi--Yau metric on $(M,\om,J,\Om)$.
Here $g$ need not be a K\"ahler metric on $(M,J)$.
If we have $\psi(x)=1$ for every $x\in M$ then
$g$ will be a K\"ahler metric of Ricci curvature $0$.
In that case we shall call $(\om,J,\Om)$ a Calabi--Yau structure on $M$,
and $g$ the Calabi--Yau metric on $(M,\om,J,\Om)$.

We define a Calabi--Yau structure on $\R^{2m}=\C^m$ as follows.
Let $(z^1,\cdots,z^m)$ be the co-ordinates of $\C^m$.
Let
$\om_0=\frac{i}{2}(dz^1\wedge d\ov{z^1}
+\cdots+dz^m\wedge d\ov{z^m})$,
$g_0=dz^1\otimes d\ov{z^1}+
\cdots+dz^m\otimes d\ov{z^m}$
and $\Om_0=dz^1\wedge\cdots\wedge dz^m$.
Let $J_0$ be
the complex structure of $\C^m$.
Then $(\om_0,J_0,\Om_0)$
is a Calabi--Yau structure on $\R^{2m}$,
and $g_0$ is the almost Calabi--Yau metric on $(\R^{2m},\om_0,J_0,\Om_0)$.

Let $(M,\om,J,\Om)$ be an almost Calabi--Yau manifold,
and let $g$ be the almost Calabi--Yau metric on $(M,\om,J,\Om)$.
Then $\re\Om$ will be a calibration of degree $m$ on $(M,g)$
in the sense of Harvey and Lawson~\cite{HL};
i.e. for any point $x\in M$ and any $\R$-linear subspace $S\su T_xM$ with $\dim_\R S=m$ we have $|(\re\Om)_x|_S|\le1$ where the norm is induced from the metric $g$ on $M$.

Special Lagrangian submanifolds of $(M,\om,J,\Om)$ are defined as real $m$-dimensional submanifolds $N$ of $M$ with $|\re\Om|_N|=1$ where the norm is again induced from the metric $g$ on $M$.
Special Lagrangian submanifolds of $(M,\om,J,\Om)$
will be Lagrangian with respect to $\om$ and area-minimizing with respect to $g$.

We can also define $\re\Om$-varifolds and currents in $(M,g)$,
which we shall call special Lagrangian varifolds and currents in
$(M,\om,J,\Om)$, respectively.
In the remainder of this section we shall consider
calibrated geometry not limited to special Lagrangian geometry.

We suppose that $M$ is a manifold.
For each $x\in M$
we denote by $G_p(T_xM)$
the Grassmann manifold of all vector subspaces of $T_xM$ of dimension $p$.
We put $G_p(TM)=\bigcup_{x\in M}G_p(T_xM)$.
By a varifold of dimension $p$ in $M$
we shall mean a Radon measure on $G_p(TM)$.

We suppose that $g$ is a Riemannian metric on $M$,
and $\phi$ is a calibration of degree $p$ on $M$.
For each $x\in M$ we put
$G_\phi(T_xM)=\{S\in G_p(T_xM):|\phi|_S|_g=1\}$.
We also put $G_\phi(TM)=\bigcup_{x\in M}G_\phi(T_xM)$.
By a $\phi$-varifold in $M$
we shall mean a Radon measure on $G_\phi(TM)$.

For each $x\in M$ and $S\in G_{\phi}(T_xM)$
we define $\overrightarrow{S}\in\bigwedge^p T_xM$ as follows.
We take an orthonormal basis $(e_1,\cdots,e_p)$ for $T_xM$ with respect to $g|_x$
such that $\lg\phi|_x,e_1\wedge\cdots\wedge e_p\rg=1$.
We set
$\overrightarrow{S}=e_1\wedge\cdots\wedge e_p$.
It is easy to see
that $\overrightarrow{S}$ is independent of the choice of $(e_1,\cdots,e_p)$,
and so well-defined.

Let $V$ be a $\phi$-varifold in $M$.
Then we can define a $p$-current $\overrightarrow{V}$ in $M$ by setting
\[\overrightarrow{V}(\chi)=\int_{G_{\phi}(TM)}
\lg\chi|_x,\overrightarrow{S}\rg dV(x,S)\]
for every compactly-supported $p$-form $\chi$ on $M$.

Harvey and Lawson~\cite[Chapter~II.1, Definition~1.4]{HL}
define positive $\phi$-currents in $M$,
which we shall explain next.
First of all we recall a definition of
Harvey and Lawson~\cite[Chapter~II.A, Definition~A.1]{HL}:
by a $\phi$-non-negative $p$-form on $M$
we shall mean a $p$-form $\chi$ on $M$
with $\lg\chi|_x,\overrightarrow{S}\rg\geq0$ for every
$x\in M$ and $S\in G_\phi(T_xM)$.
Harvey and Lawson~\cite[Chapter~II.A, Proposition~A.2]{HL}
prove that a $p$-current $T$ in $M$
is a positive $\phi$-current
if (and only if) we have $T(\chi)\geq0$
for every compactly-supported $\phi$-non-negative $p$-form on $M$.
We have:
\begin{thm*}
Let $V$ be a $\phi$-varifold in $M$.
Then $\overrightarrow{V}$ is a positive $\phi$-current in $M$
in the sense of Harvey and Lawson.
\end{thm*}
\begin{proof}
We have only to prove that
if $\chi$ is a compactly-supported $\phi$-non-negative $p$-form on $M$
then we have $\overrightarrow{V}(\chi)\geq0$.
By the definition of $\phi$-non-negative $p$-forms we have
$\lg\chi|_x,\overrightarrow{S}\rg\geq0$
for every $x\in M$ and $S\in G_\phi(T_xM)$.
Hence we get
$\overrightarrow{V}(\chi)=\int
\lg\chi|_x,\overrightarrow{S}\rg dV(x,S)\geq0$,
completing the proof.
\end{proof}

We suppose now that $M$ is compact.
We take $a\in H_p(M;\R)$.
We denote by $\cV$
the set of all compactly-supported integral $\phi$-varifolds $V$ in $M$,
$\d\overrightarrow{V}=0$ and $[\overrightarrow{V}]=a$.
Here $V$ integral means that there exists an integer-valued $\|V\|$-measurable function $\Th_V$ (called the {\it multiplicity} function of $V$) such that for any compactly-supported continuous function $f:M\to[0,\iy)$ we have
$\|V\|(f)=\int_Mf\Th_V\cH^p$ where $\cH^p$ denotes the $p$-dimensional Hausdorff measure in $(M,g)$.

We give $\cV$ the weak topology in the sense of Allard
\cite[Definition~2.6(2)]{Allard},
i.e. the topology of the Radon measures on $G_p(TM)$.
We have then:
\begin{thm*}
$\cV$ is compact.
\end{thm*}
\begin{proof}
It is easy to see that $\cV$ is a metrizable space.
It suffices therefore to prove that if $V_1,V_2,V_3,\cdots\in\cV$
then there exists a subsequence of $(V_n)_{n=1}^\iy$
converging in $\cV$.
Since $M$ is compact it is clear that $\phi$ is compactly-supported,
and for each $n=1,2,3,\cdots$ therefore we have
$\area V_n=\overrightarrow{V_n}(\phi)=a\cdot[\phi]$
where $[\phi]$ denotes the de Rham cohomology class of $\phi$.
This implies that $\area V_n$ is bounded with respect to $n$.
By an integral compactness theorem of Allard~\cite[Theorem~6.4]{Allard},
therefore, we can find a subsequence of $(V_n)_{n=1}^\iy$
converging as Radon measures on $G_p(TM)$.
We may identify $(V_n)_{n=1}^\iy$
with the subsequence.
We denote its limit by $V$.
It suffices then to prove $V\in\cV$.
Allard's \cite[Theorem~6.4]{Allard} integral compactness theorem implies that $V$ is an integral varifold.
Since $V_n$ tends to $V$ as varifolds
we see that $\overrightarrow{V_n}$ tends to $\overrightarrow{V}$
as $p$-currents in $M$.
As $\d\overrightarrow{V_n}=0$
for every $n=1,2,3,\cdots$ so we have $\d\overrightarrow{V}=0$.
As $[\overrightarrow{V_n}]=a$
for every $n=1,2,3,\cdots$ so we have $[\overrightarrow{V}]=a$.
We have therefore $V\in\cV$ as we want.
\end{proof}

Let $V$ be a varifold of dimension $p$ in $M$.
Then we shall denote by $\|V\|$ the Radon measure on $M$ defined by
setting $\|V\|(f)=V(f\ci\pi)$ for every $f\in C_c(M;\R)$,
where $\pi$ denotes the projection of $G_p(TM)$ onto $M$,
and $C_c(M;\R)$ denotes the set of all compactly-supported continuous functions on $M$.

We denote by $\mathcal{R}$ the set of all Radon measures on $M$
which may be expressed as $\|V\|$ for some $V\in\cV$.
We give $\mathcal{R}$ the topology of the Radon measures on $M$.
We have then:
\begin{thm*}
The mapping $V\mapsto\|V\|$
is a homeomorphism of $\cV$ onto $\mathcal{R}$.
\end{thm*}
\begin{proof}
From the definition of $\mathcal{R}$ it is clear that
$V\mapsto\|V\|$ maps $\cV$ onto $\mathcal{R}$.
We claim that the mapping $V\mapsto\|V\|$ is one-to-one.
As $V$ is area-minimizing it follows from Allard's theorem \cite[Theorem 5.5] {Allard} that $V$ is {\it rectifiable} so that for each $V\in\cV$ and $f\in C_c(M;\R)$ we have
\e\l{Vf}V(f)=\int_{x\in M}f(x,T_x\|V\|)d\|V\|(x)\e
where $T_x\|V\|$ denotes the (approximate) tangent space to $\|V\|$ at $x$, which exists for $\|V\|$-almost every $x\in M$
(the terminology `approximate' tangent space is used by Simon \cite[see Remark 38.2]{Simon3}).
The expression \eq{Vf} shows that $V$ is determined by $\|V\|$ so that the mapping $V\mapsto\|V\|$ is one-to-one.

From the definition of $\|V\|$, moreover, we see that
$V\mapsto\|V\|$ is continuous.
We have thus proved that $V\mapsto\|V\|$
is a continuous bijection of $\cV$ onto $\mathcal{R}$.
Notice also that $\cV$ is compact and $\mathcal{R}$ is Hausdorff.
Then we see that
$V\mapsto\|V\|$
is a homeomorphism of $\cV$ onto $\mathcal{R}$.
\end{proof}

We denote by $\cT$ the set of all positive $\phi$-currents in $M$
which may be expressed as $\overrightarrow{V}$ for some $V\in\cV$.
We give $\cT$ the topology of $p$-currents in $M$.
We have then:
\begin{thm*}
The mapping $V\mapsto\overrightarrow{V}$
is a homeomorphism of $\cV$ onto $\cT$.
\end{thm*}
\begin{proof}
From the definition of $\cT$ it is clear that
$V\mapsto\overrightarrow{V}$ maps $\cV$ onto $\cT$.
We claim that the mapping $V\mapsto\overrightarrow{V}$ is one-to-one.
For each $V\in\cV$ and $f\in C_c(M;\R)$ we have
\[\overrightarrow{V}(f\phi)=\int_{(x,S)\in G_\phi(TM)}\lg f(x)\phi|_x,S\rg dV(x,S)=\int_{G_\phi(TM)}f(x)dV=\|V\|(f).\]
Thus $\overrightarrow V$ determines $\|V\|$.
On the other hand we have already seen in \eq{Vf} that $\|V\|$ determines $V$.
Consequently $\overrightarrow V$ determines $V$; more precisely the mapping $V\mapsto\overrightarrow V$ is one-to-one.

It is easy to see that $V\mapsto\overrightarrow{V}$ is continuous.
We have thus proved that $V\mapsto\overrightarrow{V}$
is a continuous bijection of $\cV$ onto $\cT$.
Notice also that $\cV$ is compact and $\cT$ is Hausdorff.
Then we see that
$V\mapsto\overrightarrow{V}$
is a homeomorphism of $\cV$ onto $\cT$.
\end{proof}

%

\section{Analysis of Blowing-up}
\l{bu}
In this section we shall analyse blowing-up near multiplicity-one
special Larangian cones with isolated singularity.
We can summarize our results as follows.

Let $(M,\om,J,\Om)$ be an almost Calabi--Yau manifold of complex dimension $m$, and $X$ a special Lagrangian $m$-fold in $(M,\om,J,\Om)$ with only one singular point $x$ modelled on a multiplicity-one special Lagrangian cone $C\su\C^m$ with isolated singularity, where $C$ need not be the stable $T^2$-cone as in \S\ref{intro} and so we are in a more general situation.
The problems in the present section are {\it local} near $x$ and so we may take $M$ to be an open ball about $x$.
We consider special Lagrangian varifolds $V_1,V_2,\cdots,V_n,\cdots$ tending to $X$.

We begin in \S\ref{energy section} by recalling the definition of an {\it energy} functional for $V_n$ introduced in the preceding paper \cite{Imagi}.
We prove that if the energy of $V_n$ is small then $V_n$ has singularity only at one point $y_n$ and
asymptotic at $y_n$ to
a multiplicity-one special Lagrangian cone $C^1$-close to $C$.

We also prove in \S\ref{bubble-off section} that if the energy of $V_n$ is large for all $n$ then
$V_n$ {\it blows up}; i.e. there exist points $y_n$ near $x_n$ and small numbers $s_n>0$ such that if we re-scale $V_n$ about $y_n$ by $s_n$ then it will tend to a special Lagrangian varifold $W$ with no boundary in $\C^m$ asymptotic at infinity to a multiplicity-one special Lagrangian cone $C^1$-close to $C$.


\subsection{Energy Functional}
\l{energy section}
We define an energy functional as follows.
For each varifold $V$ of dimension $m$ in $\R^n$ we put
\[\mathcal{E}(V)=
\int_{\R^n\times G_m(\R^n)}
\frac{|S^{\perp}y|^2}{|y|^{m+2}}dV(y,S)\in[0,\iy]\]
where $S^\perp$ denotes the orthogonal complement to $S$ in $\R^n$,
and
$S^\perp y$ denotes the projection of $y$ onto $S^\perp$.

We shall recall a monotonocity formula for stationary varifolds.
For each $\rho>0$ let
$B_\rho$ denote the open ball of radius $\rho$ about $\bm0$ in $\C^m$, i.e. $B_\rho:=\{y\in\R^n:|y|<\rho\}$. 
For each $\rho>\si>0$ let $A_{\si,\rho}$ denote the open annulus of inner radius $\si$ and outer radius $\rho$ about $\bm0$ in $\C^m$, i.e.
$A_{\si,\rho}=B_\rho\-\,\ov{\!B_\si}$.
For each $Z\subset\R^n$ we put $\widetilde{Z}=Z\times G_m(\R^n)$.
If $V$ is a stationary varifold of dimension $m$ in $(B,g_0)$
and if $0<\si<\rho<1$
then we have
\begin{equation}\l{flat version}
\frac{\|V\|(B_\rho)}{\rho^m}
-\frac{\|V\|(B_\si)}{\si^m}
=\mathcal{E}(V\llcorner\widetilde{A}_{\si,\rho});
\end{equation}
for the proof we refer e.g. to Allard~\cite[Theorem~5.1(1)]{Allard}
or Simon~\cite[Equation~17.4]{Simon3}.
It is easy to extend \eqref{flat version} to Riemannian metrics in place of $g_0$:
\begin{proposition}\l{mono}
There exist constants $\ep_0\in(0,1)$ and $k>1$ depending only on $m,n$ and satisfying the following property:
let $g$ is a Riemannian metric on $B_1$
with $\ep:=|g-g_0|_{C^1(B_1)}<\ep_0.$
and $V$ a stationary varifold of dimension $m$ in $(B_1,g);$
then for every $\si,\rho\in\R$ with $0<\si<\rho<1$ we have
\e\l{sir}e^{k\ep\rho}\frac{\|V\|(B_\rho)}{\rho^m}
-e^{k\ep\si}\frac{\|V\|(B_\si)}{\si^m}
\geq\mathcal{E}(V\llcorner\widetilde{A}_{\si,\rho}).\e
\end{proposition}
\begin{rem}
The co-efficients $e^{k\ep\rho}$ and $e^{k\ep\si}$ on the left-hand side come from the following computation:
modifying the proof of Simon~\cite[Equation~17.4]{Simon3} we get indeed
\[k\ep\frac{\|V\|(B_\rho)}{\rho^m}+\frac{d}{d\rho}\frac{\|V\|(B_\rho)}{\rho^m}\ge\frac{d}{d\rho}\int_{y\in\widetilde{B}_\rho}\frac{|S^\perp y|^2}{|y|^{m+2}}dV\]
the left-hand side of which is not greater than
$\frac{d}{d\rho}\bigl(e^{k\ep\rho}\frac{\|V\|(B_\rho)}{\rho^m}\bigr)$
and so integration over the interval $(\si,\rho)$ implies \eq{sir}.
\end{rem}

We suppose now that $n=2m$ and $(\om,J,\Om)$
is an almost Calabi--Yau structure on $B_1$.
We denote by $\cV_0$ the space of all special Lagrangian
integral varifolds with no boundary in $(B_1,\om,J,\Om)$.

We define $r:\R^{2m}\to[0,\iy)$
by setting $r(y)=|y|$ for each $y\in\R^{2m}$.
We put $g_{\cyl}=r^{-2}g_0$,
which we shall call the cylindrical metric on $\R^{2m}\-\{0\}$.

We regard $(0,\iy)$ as a multiplicative group
acting upon $\R^{2m}$ as re-scaling.
By a smooth cone in $\R^{2m}$
we shall mean a closed submanifold of
$\R^{2m}\-\{0\}$
invariant under the re-scaling by $(0,\iy)$.
We denote by $\mathcal{C}$ the set of all special
Lagrangian smooth cones in $(\R^{2m},\om_0,J_0,\Om_0)$.

We suppose $C\in\mathcal{C}$.
We denote by $NC$ the normal bundle to $C$ in $\R^{2m}$
with respect to $g_0$.
We get the same bundle
even if we use $g_{\cyl}$ in place of $g_0$.

For each $u\in C^k(C;NC)$ and $\rho<\si$
we define $|u|^{k,\cyl}_{[\rho,\si]}$ as follows.
We put $t=-\log r$ and
$\d_t=\frac{\d}{\d t}$.
We put $\Si=C\cap S^{2m-1}$.
By the definition of smooth cones
$\Si$ is a compact submanifold of $S^{2m-1}$.
We denote by $\nabla_\Si$ the Levi-Civita connexion over $\Si$
induced from $g_0$.
We put
\[|u|^{k,\cyl}_{[\rho,\si]}=
\sup_{C\cap A_{\si,\rho}}
\sum_{i,j\geq0,i+j\leq k}|\d_t^i\nabla_\Si^ju|.\]

If $|u|^{1,\cyl}_{[\rho,\si]}$ is sufficient small,
then we can define
the exponential map $\exp u:C\cap A_{\si,\rho}\to A_{\si,\rho}$
with respect to the metric $g_{\cyl}$,
and the image of $\exp u$ will be a submanifold of $A_{\si,\rho}$,
which we shall denote by $\gr_{\cyl}u$.
We put $|{\gr_{\cyl}u}|=\mathcal{H}^m\llcorner\gr_{\cyl}u$
where $\mathcal{H}^m$ denotes the Hausdorff $m$-dimensional measure
with respect to the almost Calabi--Yau metric $g$.

We suppose that $(\om,J,\Om)$ is an almost Calabi--Yau structure on $B_1$,
and $g$ is the almost Calabi--Yau metric on $B_1$.
From the proof of the author~\cite[Theorem~2.2]{Imagi}
we get:
\begin{thm}\l{AAS}
There exists $\ep>0$ depending only on $m$ and $C$
such that if we have
\[|\Om-\Om_0|_{C^0(B_1)}+
|g-g_0|_{C^1(B_1)}<\ep,\;
V\in\cV_0,\;0<\rho<1,\;
\mathcal{E}(V\llcorner\widetilde{A}_{\rho,1})<\ep\]
\[\text{and }
v\in C^\iy(C\cap A_{1/2,1};NC),\;|v|^{2,\cyl}_{[1/2,1]}<\ep,\;
|{\gr_{\cyl}v}|=\|V\|\llcorner A_{1/2,1}\]
then we can extend $v$ to $C\cap A_{\rho,1}$
so that $\|V\|\llcorner A_{\rho,1}=|{\gr_{\cyl}v}|$
and $|v|^{2,\cyl}_{[\rho,1]}<k\ep^\al$
for some $k>0$ and $\al\in(0,1)$ depending only on $m$ and $C$.
\end{thm}
\begin{rem*}
For the proof one has to use a result of \L{}ojasiewicz~\cite{Lojasiewicz},
following Simon~\cite{Simon}.
\end{rem*}
We shall give a corollary to the theorem above.
We give $\mathcal{C}$ the $C^\iy$-topology.
Let $\mathcal{C}'$ be a neighbourhood of $C$ in $\mathcal{C}$.
Then we have:
\begin{cor}\l{energy small}
There exists $\ep>0$ depending only on $m$, $C$ and $\mathcal{C}'$
such that if we have
\[|\Om-\Om_0|_{C^0(B_1)}+
|g-g_0|_{C^1(B_1)}<\ep,\;
V\in\cV_0,\;\mathcal{E}(V)<\ep\]
\[\text{and }
v\in C^\iy(C\cap A_{1/2,1};NC),\;|v|^{2,\cyl}_{[1/2,1]}<\ep,\;
|{\gr_{\cyl}v}|=\|V\|\llcorner A_{1/2,1}\]
then $V$ is singular only at $\bm0$
and asymptotic at $\bm0$ to some $C'\in\mathcal{C}'$ with multiplicity $1$.
\end{cor}
\begin{rem}\l{en sm}
By a result of Simon~\cite[Theorem~5]{Simon}
$C'$ will be a unique tangent cone to $V$ at $\bm0$.
\end{rem}
\begin{proof}
Let $C'$ be a tangent cone to $V$ at $\bm0$.
For each $\de>0$ let $\de^{-1}V$ denote the re-scaling of $V$ by $\de^{-1}$.
Then by the definition of tangent cones we can take $\de_1>\de_2>\de_3>\cdots$
tending to $0$ with $\de_n^{-1}V$ tending to $C'$
as $n\to\iy$.
By definition $\cC'$ is a $C^1$-neighbourhood of $C$ in $\cC$ and so we can take $\eta>0$ such that if
$v\in C^\iy(\Si;NC)$ and $|v|_{C^1(\Si)}\leq\eta$
then we have $\gr_{\cyl}v\in\mathcal{C}'$.

Let $k,\al$ be as in Theorem \ref{AAS} and make $\ep$ so small that $k\ep^\al<\eta$.
Applying Theorem \ref{AAS} with $\rho=\de/2$ we find $v_n\in C^\iy(C\cap A_{\de_n/2,1};NC)$
such that
\e\l{vn}\|V\|\llcorner A_{\de_n/2,1}
=|{\gr_{\cyl}v_n}|\text{ with }|v_n|^{2,\cyl}_{[\de_n/2,1]}\leq\eta.\e
Define $v_n'\in C^\iy(C\cap A_{1/2,1};NC)$ by $v_n'(y)=v_n(\de_n y)$ for $y\in C\cap A_{1/2,1}$. Then by \eq{vn} we have
\e\|\de_n^{-1}V\|\llcorner A_{1/2,1}
=|{\gr_{\cyl}v_n'}|\text{ with }|v_n'|^{2,\cyl}_{[1/2,1]}\leq\eta.\e
The definition of $|\bu|^{2,\cyl}_{[1/2,1]}$ also implies that $v_n'$ is $C^2$-bounded and so we can find a subsequence of $v_n'$ converging in the $C^1$-topology as $n\to\iy$ to some $w\in C^1(C\cap A_{1/2,1};NC)$ with $|w|^{1,\cyl}_{[1/2,1]}\le\eta$.
On the other hand since $\de_n^{-1}V$ tends to $C'$ as $n\to\iy$ it follows that
\e\l{cw}C'\cap A_{1/2,1}=\gr w.\e
This implies that $C'$ is a multiplicity-one cone with isolated singularity, and a result of Simon~\cite[Theorem~5]{Simon} implies in particular that $V$ is singular only at $\bm0$.

The equation \eq{cw} also implies that $w$ is $C^\iy$-differentiable and invariant under the re-scaling of $C$. In particular since $|w|^{1,\cyl}_{[1/2,1]}\le\eta$ it follows that $C'\in\cC'$, completing the proof of Corollary \ref{energy small}.
\end{proof}

\subsection{Bubbling-off}
\l{bubble-off section}
We suppose that $(\om,J,\Om)$ is an almost Calabi--Yau structure on $B_1$
with $J|_0=J_0$ and $\Om|_0=\Om_0$.
We denote by $g$ the almost Calabi--Yau metric on $(B_1,\om,J,\Om)$.
We write $g=\sum_{i,j=1}^{2m}g_{ij}dy^idy^j$
and suppose
\begin{equation*}
g_{ij}(0)=\de_{ij}\text{ and }
\frac{\d g_{ij}}{\d y^k}(0)=0
\text{ for each }i,j,k=1,\cdots,2m.
\end{equation*}

If $W$ is a varifold in $\R^n$
and if $\de>0$ then we can define $\de^{-1}W$
by re-scaling $W$ by $\de^{-1}$ (as in the proof of Corollary \ref{energy small} above).
If $b\in\R^n$ then
we can also define $W-b$
by translating $W$ by $-b$.

We take $C\in\mathcal{C}$
and a neighbourhood $\mathcal{C}'$ of $C$ in $\mathcal{C}$.
We denote by $\mathcal{X}'$
the space of all elements of $\cV_0$ with singularity only at one point $y$
and asymptotic at $y$ to some element of $\mathcal{C}'$
with multiplicity $1$.
Let $X\in\cX'$ and let $X$ be singular at $\bm0$ and asymptotic at $\bm0$ to $C$ with multiplicity $1$.
We have then:
\begin{thm}\l{X top}
Let $(X_n)_{n=1}^\iy$ be a sequence in $\mathcal{X}'$
converging to $X$.
For each $n=1,2,3,\cdots$
let $x_n$ be the singular point of $X_n$,
and let $C_n$ be the multiplicity $1$ smooth tangent cone to $X_n$ at $x_n$.
Then $x_n$ tends to $\bm0$ and $C_n$ tends to $C$ as $n\to\iy$.
\end{thm}
\begin{proof}
By Allard's regularity theorem $x_n$ tends to $\bm0$ as $n\to\iy$, and so we have only to prove that for each neighbourhood $\cC'$ of $C$ in $\cC$ there exists an integer $N>0$ such that for $n>N$ we have $C_n\in\cC$.
Let $\ep>0$ be so small that we may apply Proposition \ref{mono} and Corollary \ref{energy small} where Corollary \ref{energy small} involves the choice of $\cC'$ and so $\ep$ depends on $\cC'$.
By Proposition~\ref{mono} we have
\[\mathcal{E}\bigl((X_n-x_n)\llcorner\widetilde{A}_{\si,\rho}\bigr)\leq e^{k\ep\rho}\frac{\|X_n-x_n\|(B_\rho)}{\rho^m}
-e^{k\ep\si}\frac{\|X_n-x_n\|(B_\si)}{\si^m}\]
where $0<\si<\rho<1$.
Letting $\si\to0$ we get
\e\l{EX}\mathcal{E}(X_n-x_n)\leq e^{k\ep\rho}\frac{\|X_n-x_n\|(B_\rho)}{\rho^m}
-\area(C_n\cap B_1).\e
Making $\cC'$ smaller if necessary we may suppose that $|\area(C'\cap B_1)-\area(C\cap B_1)|<\ep/2$ for all $C'\in\cC'$ so that by \eq{EX} we have
\begin{equation}\l{EE}
\mathcal{E}(X_n-x_n)\leq e^{k\ep\rho}\frac{\|X_n-x_n\|(B_\rho)}{\rho^m}
-\area(C\cap B_1)+\frac{\ep}{2}.
\end{equation}
Making $\rho$ smaller if necessary we may suppose
\[e^{k\ep\rho}\frac{\|X\|(B_\rho)}{\rho^m}-\area(C\cap B_1)<\frac{\ep}{2}.\]
For $n$ sufficiently large, therefore, we have
\[e^{k\ep\rho}\frac{\|X_n-x_n\|(B_\rho)}{\rho^m}-\area(C\cap B_1)<\frac{\ep}{2}.\]
This combined with \eqref{EE} implies
\[\mathcal{E}(X_n-x_n)<\ep.\]
On the other hand Allard's regularity theorem implies that $\|X_n-x_n\|\llcorner A_{1/2,1}=\gr v_n$ for some $v_n\in C^\iy(C\cap A_{1/2,1};NC)$ with $|v_n|_{[1/2,1]}^{2,\cyl}<\ep$.
Hence applying Corollary~\ref{energy small} and Remark \ref{en sm} to $X_n-x_n$ we find that $C_n\in\cC'$, completing the proof of Theorem \ref{X top}.
\end{proof}

We have also:
\begin{thm}\l{bubble-off}
Let $(V_n)_{n=1}^\iy$ be a sequence in
$\cV_0\-\mathcal{X}'$
converging to $X$.
Then there exists a sequence $(\de_n)_{n=1}^\iy$
of positive real numbers converging to $0$,
a sequence $(y_n)_{n=1}^\iy$ in $B_1$
converging to $0$,
and a subsequence of $\bigl(\de_n^{-1}(V_n-y_n)\bigr)_{n=1}^\iy$
converging to some varifold $W$
in $\R^{2m}$
asymptotic at infinity to some element of $\mathcal{C}'$ with multiplicity $1$
and satisfying $\mathcal{E}(W-b)>0$ for every $b\in\R^{2m}$.
\end{thm}
\begin{rem*}
$W$ will automatically
be a special Lagrangian integral varifold
with $\d{\overrightarrow{W}}=0$
in $(\R^{2m},\om_0,J_0,\Om_0)$.
\end{rem*}
\begin{proof}
We take positive real numbers $\si,\rho$ and $\ep$,
which we shall make smaller if necessary.
Allard's regularity theorem implies that for each $y\in B_{\si\rho}$ and $n$ large enough there exists $v_{n,y}\in C^\iy(C\cap A_{1/2,1};NC)$ such that
\e\l{vy}\|V_n-y\|\llcorner A_{1/2,1}=|\gr v_{n,y}|,\,|v_{n,y}|_{[1/2,1]}^{2,\cyl}<\ep\e
where $\ep$ is as in Corollary \ref{energy small}.
Hence by Corollary \ref{energy small} we find that $\mathcal{E}(V_n-y)\geq\ep$;
otherwise $V_n$ will be singular only at $y$
and asymptotic at $y$ to some element of $\mathcal{C}'$
with multiplicity $1$, which contradicts $V_n\in\cV_0\-\mathcal{X}'$.
We put
\[\de_n(y)
=\inf\left\{\de\in(0,\rho):\mathcal{E}\bigl((V_n-y)
\llcorner\widetilde{A}_{\de,\rho}\bigr)=\frac{\ep}{2}\right\}.\]
Since $\mathcal{E}(V_n-y)\geq\ep$ we see $\de_n(y)>0$.
It is also easy to see that
$y\mapsto\de_n(y)$ is lower semi-continuous.
Hence we can find
$y_n\in B_{\si\rho}$ with
$\de_n(y_n)=\inf_{y\in B_{\si\rho}}\de_n(y)$.
We put
$\de_n=\de_n(y_n)>0$.
We have then:
\begin{lem*}
$\de_n$ tends to $0$ as $n\to\iy$.
\end{lem*}
\begin{proof}
Making $\rho$ smaller if necessary
we may suppose $\mathcal{E}(X\llcorner\widetilde{B}_\rho)<\ep/2$.
We take $\de>0$.
We have then $\mathcal{E}(X\llcorner\widetilde{A}_{\de,\rho})<\ep/2$.
Since $V_n$ tends to $X$ as $n\to\iy$
we see that
for $n$ sufficiently large
we have $\mathcal{E}(V_n\llcorner\widetilde{A}_{\de,\rho})<\ep/2$.
This implies $\de_n(0)<\de$
and so $\de_n(0)$ tends to $0$ as $n\to\iy$.
Since $\de_n\leq\de_n(0)$
we see that $\de_n$ also tends to $0$
as $n\to\iy$.
\end{proof}
We have also:
\begin{lem*}
$\bigl(\de_n^{-1}(V_n-y_n)\bigr)_{n=1}^{\iy}$
has a subsequence converging to
some varifold $W$ in
$\R^{2m}$.
\end{lem*}
\begin{proof}
Let $R>0$. Then we have only to prove
\begin{equation}\l{bdd}
\sup_{n=1,2,3,\cdots}\|\de_n^{-1}(V_n-y_n)\|(B_R)<\iy.
\end{equation}
Notice that
$\|\de_n^{-1}(V_n-y_n)\|(B_R)=\de_n^{-m}\|V_n-y_n\|(B_{\de_nR})$.
Notice also that by Proposition~\ref{mono} we can find $\ep'>0$ such that
\[e^{k\ep'\de_nR}\frac{\|V_n-y_n\|(B_{\de_nR})}{(\de_nR)^m}
\leq e^{k\ep'}\|V_n-y_n\|(B_1)\]
which tends to $e^{k\ep'}\|X\|(B_1)$ as $n\to\iy$.
Then we get \eqref{bdd}.
\end{proof}

We take $W$ as above.
From the definition of $\de_n$
it is easy to see
$\mathcal{E}(W\llcorner\widetilde{A}_{1,\iy})\leq\ep/2$.
Using \eq{vy} with $y=y_n$ we can also apply Theorem \ref{AAS} to find $v_n\in C^\iy(C\cap A_{\de_n,1};NC)$ with
\e\|V_n-y_n\|\llcorner A_{\de_n,1}=\gr v_n,\,|v_n|_{[\de_n,1]}^{2,\cyl}<k\ep^\al.\e
This implies that there exists $w\in C^\iy(C\cap A_{1/2,1};NC)$ such that
\e\|W\|\llcorner A_{1,2}=\gr w,\,|w|_{[1,2]}^{2,\cyl}\le k\ep^\al\e
and so the situation is similar to that of Corollary~\ref{energy small} in the sense that $W$ is away from infinity close to the multiplicity-one cone with isolated singularity and that $W$ satisfies the energy estimate $\mathcal{E}(W\llcorner\widetilde{A}_{1,\iy})\leq\ep/2$.
In a way similar to the proof of Corollary \ref{energy small} we can prove indeed that
$W$ is asymptotic at infinity to a multiplicity-one cone $C'$ with isolated singularity.
Making $\ep$ smaller if necessary we may suppose that $C'$ is so $C^1$-close to $C$ that
\e\l{CC'}|\area(C'\cap B_1)-\area(C\cap B_1)|<\frac{\ep}{8}.\e

It remains to prove $\mathcal{E}(W-b)>0$
for every $b\in\R^{2m}$.
We put $V_n'=V_n-y_n-\de_nb$.
By Proposition~\ref{mono} we have
\begin{equation}\l{V_n'}
\exp(k\ep'\rho)
\frac{\|V_n'\|(B_\rho)}{\rho^m}
-\exp(k\ep'\de_n)\frac{\|V_n'\|(B_{\de_n})}{\de_n^m}
\geq\mathcal{E}(V_n'\llcorner\widetilde{A}_{\de_n,\rho})\geq\frac{\ep}{2}.
\end{equation}
We note that $V_n'$ tends to $X$ and $\de_n^{-1}V_n'$ tends to $W-b$
as $n\to\iy$.
By \eqref{V_n'} therefore we have
\begin{equation}\l{V_n''}
e^{k\ep'\rho}\frac{\|X\|(B_\rho)}{\rho^m}
-\|W-b\|(B_1)\geq\frac{\ep}{2}.
\end{equation}
Making $\rho>0$ smaller if necessary we may suppose
\[e^{k\ep'\rho}\frac{\|X\|(B_\rho)}{\rho^m}
\leq\area(C\cap B_1)+\frac{\ep}{4}.\]
This combined with \eqref{V_n''} implies
\[
\area(C\cap B_1)-\|W-b\|(B_1)\ge\frac{\ep}{4}.
\]
Hence by \eq{CC'} we get
\begin{equation}\l{V_n3}
\area(C'\cap B_1)-\|W-b\|(B_1)>\frac{\ep}{8}.
\end{equation}
On the other hand by \eqref{flat version} we have
\[\frac{\|W-b\|(B_R)}{R^m}-\|W-b\|(B_1)
=\mathcal{E}\bigl((W-b)\llcorner\widetilde{A}_{1,R}\bigr)\]
for each $R>1$. Letting $R\to\iy$ we get
\[
\area(C'\cap B_1)-\|W-b\|(B_1)
=\mathcal{E}\bigl((W-b)\llcorner\widetilde{A}_{1,\iy}\bigr).
\]
This combined with \eqref{V_n3} implies
$\mathcal{E}\bigl((W-b)\llcorner\widetilde{A}_{1,\iy}\bigr)>\ep/8$
and so $\mathcal{E}(W-b)>0$, completing the proof.
\end{proof}

\section{Classification of Local Models}
\l{un}
Let $C$ be as in \eq{C}, and let $L_1,L_2,L_3$ be as in \eq{L1}--\eq{L3}.
We begin by recalling the statement of our classification result:
\begin{thm}[Re-statement of Theorem \ref{uniq}]\l{uniqueness}
Let $W$ be a special Lagrangian integral varifold with no boundary in $(\R^6,\om_0,J_0,\Om_0)$
asymptotic at infinity to $C$ with multiplicity $1$.
Then we have $\|W\|=|sL+b|$
for some $s>0$, $L\in\{C,L_1,L_2,L_3\}$ and $b\in\R^6$.
\end{thm}
\noindent
Here $|sL+b|$ denotes the Radon measure on $\R^6$
associated to $sL+b$
with multiplicity $1$ with respect to the Euclidean metric $g_0$.

In what follows we give a sketch of the proof of Theorem~\ref{uniqueness}.

First of all we analyse the asymptotic behaviour of
$W$ at infinity.
From the stability of $C$
we find $b\in\R^6$ such that
$W-b$ will decay rapidly to $C$ at infinity.
For the sake of simplicity, therefore,
we may suppose that
$W$ decays rapidly to $C$ at infinity.

We define a $T^2$-action on $\R^6=\C^3$ by setting
\[(e^{i\theta},e^{i\phi})\cdot(z_1,z_2,z_3)
=(e^{i\theta}z_1,e^{i\phi}z_2,e^{-i\theta-i\phi}z_3).\]
We note that $\om_0$, $J_0$, $\Om_0$ and $C$ are invariant under the
$T^2$-action.
We take a moment map $\mu:\C^3\to\R^2$.
We prove that $\mu$ is constant on $\spt\|W\|$.
The idea of the proof is as follows.

We suppose for a moment that $W$ is a submanifold of $\C^3$.
By the $T^2$-action, then, we can deform $W$
as a special Lagrangian submanifold of $\C^3$.
The infinitesimal deformation may be identified with $d\mu|_W$,
and so $d\mu|_W$ will be a harmonic $1$-form on $W$, by the theory of McLean.
Therefore $\mu|_W$ will be a harmonic function on $W$.
Since $W$ decays rapidly to $C$ at infinity we shall see that
$\mu|_W$ decays rapidly at infinity to some constant.
By the maximum principle, therefore,
$\mu|_W$ will be a constant function.

Actually $W$ need not be a submanifold of $\R^6$.
Using some basic results on varifolds, however, we can modify the argument above.
Thus we see that
$\mu$ is constant on $\spt\|W\|$.

Harvey and Lawson
\cite[Chapter~III.3.A, Theorem~3.1]{HL}
construct a special Lagrangian fibration
$F:\C^3\to\R^3$
invariant under the $T^2$-action.
From the proof of Harvey and Lawson
we see that every $T^2$-invariant special Lagrangian submanifold of $\C^3$
is contained in a fibre of $F$.

We suppose again that
$W$ is a submanifold of $\C^3$.
Since $\mu$ is constant on $W$ we see that
$W$ is then invariant under the $T^2$-action,
and so contained in a fibre of $F$. 
Actually $W$ need not be a submanifold of $\C^3$,
but we can again modify the argument so that
$\spt\|W\|$ will be contained in a fibre of $F$.

For each fibre of $F$ we have an explicit description of
the topology and asymptotic behaviour at infinity,
and so we shall be able to
complete the proof by an elementary argument.

We begin now with a review of some basic properties of
Laplacians over cones.

We suppose that $\Si$ is a compact manifold of dimension $m-1$.
We put $C=(0,\iy)\times\Si$.
We denote by $r$ the projection of $C$ onto $(0,\iy)$.
We suppose that $g_\Si$ is a Riemannian metric on $\Si$.
We put $g_C=dr^2+r^2g_\Si$.
With respect to $g_C$
we can define the Laplacian $\Delta_C:C^\iy(C;\R)
\to C^\iy(C;\R)$.

We have also the Laplacian $\Delta_\Si:C^\iy(\Si;\R)\to
C^\iy(\Si;\R)$ with respect to $g_\Si$.
We put
$t=\log r:C\to\R$ and $\d_t=\frac{\d}{\d t}$.
It is easy then to see
$-e^{2t}\Delta_C=\d_t^2+(m-2)\d_t-\Delta_\Si$.
We denote by
$0=\ga_0\leq\ga_1\leq\ga_2\leq\cdots$ the eigenvalues of $\Delta_\Si$.
For each integer $i\geq0$
we consider the equation $x^2+(m-2)x-\ga_i=0$
in $x$.
We denote by $\al_i$ and $\be_i$
the two solutions with $\al_i\geq\be_i$.
We suppose $m\geq2$.
Since $\ga_0=0$ we get $\be_0=2-m\leq0=\al_0$.
Since $\ga_0\leq\ga_1\leq\ga_2\cdots$
we get $\al_0\leq\al_1\leq\al_2\leq\cdots$
and $\be_0\geq\be_1\geq\be_2\geq\cdots$.
Since $\ga_i$ tends to $\iy$
as $i\to\iy$ we see that $\al_i$ tends to $\iy$
and $\be_i$ tends to $-\iy$ as $i\to\iy$.
We also put
$\Lambda=\{\al_0,\be_0,\al_1,\be_1,\al_2,\be_2,\cdots\}$.
For each $\lambda\in\R$ we put
\[E_\lambda=\{f\in C^{\iy}(\Si;\R):
\Delta_{\Si}f=\lambda(\lambda+m-2)f\}.\]
By the definition of the eigenvalues we have
$E_\lambda\neq\{0\}$ if and only if $\lambda\in\Lambda$.

We can also take a complete orthonormal basis
$\{v_0,v_1,v_2,\cdots\}$ for $L^2(\Si;\R)$ such that
$\Delta_{\Si}v_i=\ga_iv_i$ for each integer $i\geq0$.
We have then:
\begin{proposition}\l{homogeneous}
Let $I$ be an open interval in $\R$,
let $u\in C^\iy(I\times \Si;\R)$
and suppose $\Delta_Cu=0$.
Then there exist $a_0,b_0,a_1,b_1,a_2,b_2\cdots\in\R$
such that
$u=\sum_{i=0}^\iy(a_ir^{\al_i}+b_ir^{\be_i})v_i$
where the series converges in the local $C^\iy$-topology.
\end{proposition}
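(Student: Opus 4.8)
The plan is to separate variables against the orthonormal basis $\{v_i\}$ for $L^2(\Sigma;\mathbb{R})$. First I would write, for each $i\geq0$, the coefficient function $u_i(t)=\int_\Sigma u(r,\cdot)v_i\,d\mathcal{H}^{m-1}$, with $t=\log r$; since $u$ is smooth on $I\times\Sigma$ and $\Sigma$ is compact, differentiation under the integral sign is justified and $u_i\in C^\infty(J;\mathbb{R})$ where $J=\log I$. Using the identity $-e^{2t}\Delta_C=\partial_t^2+(m-2)\partial_t-\Delta_\Sigma$ established above, together with $\Delta_\Sigma v_i=\gamma_i v_i$ and $\Delta_C u=0$, one obtains the ordinary differential equation $u_i''+(m-2)u_i'-\gamma_i u_i=0$ on $J$. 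The characteristic polynomial is exactly $x^2+(m-2)x-\gamma_i=0$, whose roots are $\alpha_i\geq\beta_i$ by definition, so $u_i(t)=a_i e^{\alpha_i t}+b_i e^{\beta_i t}=a_i r^{\alpha_i}+b_i r^{\beta_i}$ when $\alpha_i\neq\beta_i$, and $u_i(t)=a_i r^{\alpha_i}+b_i r^{\alpha_i}\log r$ in the (finitely many, or isolated) degenerate cases $\alpha_i=\beta_i$; for uniformity of statement one can absorb this into the convention, or note it only happens when $\gamma_i=-(m-2)^2/4$, which forces $m=2,\gamma_i=0,i=0$, handled by hand. This gives the formal series $u=\sum_i(a_ir^{\alpha_i}+b_ir^{\beta_i})v_i$.

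The substantive point is convergence in the local $C^\infty$-topology on $I\times\Sigma$, and I expect this to be the main obstacle. The idea is elliptic regularity plus interior estimates: fix a compact subinterval $[r_1,r_2]\subset I$ and a slightly larger $[r_1',r_2']\subset I$. Interior $L^2$-elliptic estimates for the operator $\Delta_C$ on the annulus $(r_1',r_2')\times\Sigma$ bound every $C^k$-norm of $u$ on $[r_1,r_2]\times\Sigma$ by the $L^2$-norm of $u$ on $(r_1',r_2')\times\Sigma$, which in turn equals $\sum_i\int_{r_1'}^{r_2'}u_i(t)^2\,d(\text{vol})$. Since each $u_i$ is an explicit exponential solution controlled on $[r_1',r_2']$ by its values, and since Parseval gives $\sum_i u_i(t)^2=\|u(e^t,\cdot)\|_{L^2(\Sigma)}^2<\infty$ pointwise in $t$, one gets that the partial sums $\sum_{i\leq N}(a_ir^{\alpha_i}+b_ir^{\beta_i})v_i$ form a Cauchy sequence in $C^k([r_1,r_2]\times\Sigma)$ for every $k$: the difference of two partial sums is itself a solution of $\Delta_C(\cdot)=0$, so the same interior estimate applies to the tail, and the $L^2$-norm of the tail on $(r_1',r_2')\times\Sigma$ goes to $0$ as $N\to\infty$ by dominated convergence (each $\alpha_i,\beta_i$ appears with the $L^2$-tail of $u$ as a summable majorant). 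Hence the series converges in $C^\infty_{\mathrm{loc}}$, and its limit is a solution of $\Delta_C=0$ agreeing with $u$ in every Fourier coefficient, so it equals $u$.

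A cleaner variant I would actually write: rather than quote abstract elliptic estimates for $\Delta_C$, note that on a fixed annulus $\overline{A}\subset I\times\Sigma$ the function $u$ is harmonic for a fixed smooth elliptic operator, and by the standard Cauchy estimates for solutions of elliptic equations (or by Moser iteration/interior Schauder) one has $\|u\|_{C^k(\overline{A})}\leq C_{k,A'}\|u\|_{L^2(A')}$ for $A\Subset A'\Subset I\times\Sigma$; applying this to the tails $\sum_{i>N}$, which are again harmonic, reduces everything to the $L^2$-convergence $\sum_{i>N}\|u_i\|_{L^2(A')}^2\to0$, which is immediate from Parseval on $\Sigma$ and finiteness of the $t$-integral over the compact base. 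The verification that the formal series solves $\Delta_C=0$ term by term is routine from $\Delta_\Sigma v_i=\gamma_i v_i$ and the choice of exponents, and the identification of coefficients $a_i,b_i$ with initial data of $u_i$ at a basepoint $t_0\in J$ finishes the argument.
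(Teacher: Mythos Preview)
Your proposal is correct and follows essentially the same route as the paper: define the Fourier coefficients $u_i(t)$, derive the ODE $u_i''+(m-2)u_i'-\gamma_i u_i=0$ from the identity $-e^{2t}\Delta_C=\partial_t^2+(m-2)\partial_t-\Delta_\Sigma$, solve it explicitly, and then upgrade $L^2$-convergence of the partial sums (Parseval on $\Sigma$ plus a finite $t$-integral) to local $C^\infty$-convergence by applying interior elliptic estimates to the harmonic remainders. The paper phrases the last step slightly more directly by applying elliptic regularity to $u-w_n$ rather than showing the partial sums are Cauchy, but this is the same argument; your extra remark on the degenerate case $\alpha_i=\beta_i$ (which the paper glosses over and which does not arise for $m\geq3$) is a welcome bit of care.
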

We give a proof for the sake of clarity:
\begin{proof}
We put $\log I=\{t\in\R:e^t\in I\}$.
For each $t\in\log I$ and $i\in\{0,1,2,\cdots\}$ we define
$u_i(t)$ as the inner product of $u|_{\{e^t\}\times\Si}$
and $v_i$ in $L^2(\Si;\R)$.
Then $t\mapsto u_i(t)$ is a smooth function on $\log I$.
Since $\Delta_Cu=0$ we get
$(\d_t^2+(m-2)\d_t-\lambda_i)u_i=0$.
Hence we find $a_i,b_i\in\R$ such that
$u_i=a_ie^{\al_it}+b_ie^{\be_it}$.
It suffices therefore to prove that $\sum_{i=0}^\iy u_iv_i$
converges to $u$ in the local $C^\iy$-topology.
We denote by $d\mu_\Si$ the Riemannian measure on $\Si$ with respect to $g_\Si$,
and by $\|\bullet\|$ the norm of $L^2(\Si;\R)$.
We have then
\[\int_{\log I\times\Si}|u|^2dtd\mu_\Si=\int_{\log I}\sum_{i=0}^\iy\|u_i(t)\|^2dt
=\sum_{i=0}^\iy\int_{\log I}\|u_i(t)\|^2dt<\iy.\]
Putting $w_n=\sum_{i=0}^nu_iv_i$ we get
$\int_{\log I\times\Si}|u-w_n|^2dtd\mu_\Si=\sum_{i=n+1}^\iy\int_{\log I}
\|u_i(t)\|^2dt$
which tends to $0$ as $n\to\iy$.
Thus $w_n$ tends to $u$ in the $L^2$-topology.
Applying elliptic regularity to $u-w_n$
we see that $w_n$ tends to $u$ in
the local $C^\iy$-toplogy.
\end{proof}

Let $u\in C^\iy(C;\R)$
and $\al\in\R$.
Then we shall write $u=O(r^\al)$ as $r\to\iy$ if
there exists $R>0$ such that
\[\sup_{(R,\iy)\times\Si}|r^{-\al+k}\nabla^ku|<\iy
\text{ for every }k=0,1,2,3,\cdots\]
where $\nabla$ denotes the Levi-Civita connexion with respect to $g_C$,
and $|\bullet|$ denotes the norm with respect to $g_C$.

By a result of Simon~\cite[Part~I, Lemma~5.9]{Simon2}
we have:
\begin{proposition*}
Suppose
$q\in(2-m,\iy)\-\Lambda$ and
$f\in C^\iy\bigl((R,\iy)\times\Si;\R\bigr)$
with $f=O(r^{q-2})$.
Then there exists $u\in C^\iy\bigl((R,\iy)\times\Si;\R\bigr)$
with $u=O(r^q)$ such that $\Delta_Cu=f$.
\end{proposition*}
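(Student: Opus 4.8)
The plan is to reconstruct the standard argument: reduce $\Delta_Cu=f$ to a sequence of ordinary differential equations by separation of variables, solve each one with the prescribed growth, and reassemble. Put $t=\log r$, so that $-e^{2t}\Delta_C=\partial_t^2+(m-2)\partial_t-\Delta_\Sigma$, and expand $f|_{\{e^t\}\times\Sigma}=\sum_{i\ge0}f_i(t)v_i$ and the unknown $u$ as $\sum_{i\ge0}u_i(t)v_i$ in the orthonormal eigenbasis $\{v_i\}$ of $\Delta_\Sigma$. Then $\Delta_Cu=f$ is equivalent to the family
\[u_i''+(m-2)u_i'-\gamma_iu_i=-e^{2t}f_i(t)\qquad(i=0,1,2,\dots),\]
whose homogeneous solutions are $e^{\alpha_it}$ and $e^{\beta_it}$. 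Since $f$ is smooth and satisfies $f=O(r^{q-2})$ in the strong (all-derivatives) sense, for each $i$ the number $M_i=\sup_{t>\log R}e^{-qt}\,|e^{2t}f_i(t)|$ is finite, and pairing the expansion with high powers of $\Delta_\Sigma$ shows that $M_i$ decays faster than any negative power of $\gamma_i$, locally uniformly in $t$.

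Next I would solve each ODE so that $u_i=O(e^{qt})$. One has $\beta_i\le\beta_0=2-m<q$ for every $i$, while $\alpha_i\to\infty$; hence $\alpha_i>q$ for all but finitely many $i$, and $q\notin\Lambda$ rules out $q=\alpha_i$ and $q=\beta_i$ for every $i$. For the finitely many (possibly no) indices with $\alpha_i<q$ I take the forward particular solution
\[u_i(t)=\int_{\log R}^t\frac{e^{\alpha_i(t-s)}-e^{\beta_i(t-s)}}{\alpha_i-\beta_i}\bigl(-e^{2s}f_i(s)\bigr)\,ds,\]
which is $O(e^{qt})$ since both $\alpha_i,\beta_i<q$. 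For the remaining indices (with $\alpha_i>q>\beta_i$) I instead suppress the fast mode by integrating it from $+\infty$, replacing its $e^{\alpha_i(t-s)}$-term by $\frac1{\alpha_i-\beta_i}\int_t^\infty e^{\alpha_i(t-s)}e^{2s}f_i(s)\,ds$ --- convergent precisely because $e^{2s}f_i(s)=O(e^{qs})$ with $q<\alpha_i$ --- while keeping the $e^{\beta_i(t-s)}$-term integrated forward from $\log R$, which is admissible because $q>\beta_i$. An elementary estimate then gives
\[\sup_{t>\log R}e^{-qt}|u_i(t)|\le C_iM_i,\qquad C_i=\frac1{\alpha_i-\beta_i}\Bigl(\frac1{|q-\alpha_i|}+\frac1{q-\beta_i}\Bigr),\]
and $C_i$ is bounded uniformly in $i$; indeed $C_i=O(\gamma_i^{-1})$ as $i\to\infty$ because $\alpha_i,-\beta_i\sim\gamma_i^{1/2}$.

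Then I would set $u=\sum_{i\ge0}u_i(t)v_i$. As in the proof of Proposition~\ref{homogeneous}, the rapid decay of the $M_i$ together with the uniform boundedness of the $C_i$ makes the partial sums Cauchy in $L^2$ on every annulus $A_{\sigma,\rho}$, with $\Delta_C$ of the partial sums equal to $\sum_{i\le n}f_iv_i\to f$; applying elliptic regularity upgrades this to convergence in the local $C^\infty$-topology, so the limit $u$ solves $\Delta_Cu=f$. It remains to verify $u=O(r^q)$ in the strong sense. From the construction $\sup_{\{e^t\}\times\Sigma}e^{-qt}|u|<\infty$ for $t$ large; interior Schauder estimates for $\Delta_C$, applied on each dyadic annulus $A_{2^j,2^{j+1}}$ rescaled to unit size --- where $\Delta_C$ is uniformly elliptic and the rescaled right-hand side is uniformly $C^k$-bounded thanks to $f=O(r^{q-2})$ --- then yield $\sup_{(R',\infty)\times\Sigma}|r^{-q+k}\nabla^ku|<\infty$ for every $k$, with constants independent of $j$.

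The main obstacle is the bookkeeping that makes the separated estimates uniform in $i$: one must control the Green's-function constants $C_i$ and the decay of the $M_i$ well enough that the reassembled series converges smoothly and is $O(r^q)$, and, hand in hand with this, one must pass from the scalar bound on $u$ to the full multi-derivative meaning of $O(r^q)$ via the rescaling-to-annuli device. The hypothesis $q\notin\Lambda$ enters exactly at the point of keeping $|q-\alpha_i|$ and $q-\beta_i$ bounded away from $0$. One could alternatively deduce the statement from the standard weighted-Sobolev (Lockhart--McOwen type) theory for $\Delta_C$ on a cylindrical end, but the hands-on argument above stays closer to the rest of this section and is what I would write.
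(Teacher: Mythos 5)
Your argument is correct, but note that the paper does not actually prove this proposition: it is quoted directly from Simon \cite[Part~I, Lemma~5.9]{Simon2}, so there is no in-paper proof to compare against. What you have written is a faithful, self-contained reconstruction of the standard (and essentially Simon's) argument: separate variables in the eigenbasis of $\Delta_\Sigma$, solve each indicial ODE with the $e^{\alpha_i t}$-mode integrated backward from $+\infty$ whenever $\alpha_i>q$ and forward from $\log R$ otherwise, and reassemble using the rapid decay of the Fourier coefficients of a smooth function together with the $O(\gamma_i^{-1})$ decay of the Green's-function constants $C_i$. The hypotheses enter exactly where you place them: $q\notin\Lambda$ together with the discreteness of $\Lambda$ (the $\alpha_i$ increase to $+\infty$, the $\beta_i$ decrease to $-\infty$) gives a uniform positive lower bound on $|q-\alpha_i|$ and $q-\beta_i$, and $q>2-m=\beta_0$ guarantees $\beta_i<q$ for every $i$, so only the $\alpha$-mode ever needs the backward integration. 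Two small remarks. First, your Green's kernel degenerates when $\alpha_i=\beta_i$, which happens only for $m=2$ and $\gamma_i=0$; replace it there by $(t-s)e^{\alpha_i(t-s)}$ (irrelevant for this paper, where $m=3$). Second, the pointwise bound $\sup_\Sigma|u(e^t,\cdot)|\le Ce^{qt}$ does not follow from summing $|u_i(t)|\,|v_i|$ without also controlling $\sup_\Sigma|v_i|$ (which grows polynomially in $\gamma_i$, so the sum still converges thanks to the rapid decay of the $M_i$); alternatively, the bound $\bigl(\sum_i|u_i(t)|^2\bigr)^{1/2}\le e^{qt}\sum_iC_iM_i$ on the $L^2(\Sigma)$-norm is already sufficient input for the interior elliptic estimates on rescaled dyadic annuli that you invoke anyway for the derivative bounds, so this costs nothing. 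With either patch the proof is complete.
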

We give a corollary to this:
\begin{cor}
\l{linearized equation}
Suppose $R>0$, $u\in C^{\iy}\bigl((R,\iy)\times\Si;\R\bigr)$,
$p,q\in\R$, $2-m<q<p$, $q\notin\Lambda$,
$u=O(r^p)$ and
$\Delta_Cu=O(r^{q-2})$.
Then
there exists
$(f_\lambda)_{\lambda\in\Lambda}\in\bigoplus_{\lambda\in\Lambda}
E_\lambda$
such that $u=
\sum_{\lambda\in\Lambda\cap(q,p]}f_\lambda r^\lambda+O(r^q)$
where $\Lambda\cap(q,p]$ is possibly empty
and in that case we set
$\bigoplus_{\lambda\in\emptyset}E_\lambda=\{0\}$.
\end{cor}
\begin{proof}
Applying the proposition above to $\Delta_Cu$ in place of $f$
we find $u'\in C^\iy\bigl((R,\iy)\times\Si;\R\bigr)$
with $u'=O(r^q)$ such that $\Delta_Cu'=\Delta_Cu$.
By Proposition~\ref{homogeneous}
we can find some $a_0,b_0,a_1,b_1,a_2,b_2,\cdots\in\R$ such that
$u-u'=\sum_{i=0}^\iy(a_ir^{\al_i}+b_ir^{\be_i})v_i$.

Since $u'=O(r^q)$ we get
$u=\sum_{i=0}^\iy(a_ir^{\al_i}+b_ie^{\be_i})v_i+O(r^q)$.
Since $u=O(r^p)$ and $p>q$
we get $a_i=0$ if $\al_i>p$.
Since $\al_0\leq\al_1\leq\al_2\leq\cdots$
tend to $\iy$
we can find a unique integer $i(p)$ such that $\al_{i(p)+1}>p$
and $\al_{i(p)}\leq p$.
Since $a_i=0$ for every $i>i(p)$
we see that
$\sum_{i=0}^\iy b_ir^{\be_i}v_i$
converges in the local $C^\iy$-topology.
We put $w=\sum_{i=0}^\iy b_ir^{\be_i}v_i$.

Since $0\geq2-m=\be_0\geq\be_1\geq\be_2\geq\cdots$
it follows that for every $\rho>R$ we have
\[\int_\rho^{2\rho}\int_\Si
|w|^2\frac{dr}{r}d\mu_\Si\leq\left(\frac{\rho}{R}\right)^{2-m}
\int_R^{2R}\int_\Si|w|^2\frac{dr}{r}d\mu_\Si.\]
Applying elliptic regularity to $w$
we get $w=O(r^{2-m})$.
Since
$u=\sum_{i=0}^{i(p)}a_ir^{\al_i}v_i
+w+O(r^q)$ and $q>2-m$
we get $u=\sum_{i=0}^{i(p)}a_ir^{\al_i}v_i
+O(r^q)$, completing the proof.
\end{proof}
We suppose now
that $C$ is a smooth special Lagrangian cone in
$(\R^{2m}\-\{0\},\om_0,J_0,\Om_0)$
and $\Si=C\cap S^{2m-1}$.
For each $\rho>0$ we put $B_\rho=\{y\in\R^n:|y|<\rho\}$.
We denote by $i_C:C\to\R^{2m}$ the inclusion map of $C$ into $\R^{2m}$.

Joyce~\cite[Definition~7.1]{J1} defines
special Lagrangian submanifolds of $(\R^{2m},\om_0,J_0,\Om_0)$
asymptotic to $C$ with multiplicity $1$ at infinity with some rate $<2$.
We extend it to varifolds as follows.
We denote by $\mathcal{W}$ the set of all
special Lagrangain integral varifolds $W$
with $\d\overrightarrow{W}=0$ in
$(\R^{2m},\om_0,J_0,\Om_0)$
asymptotic at infinity to $C$ with multiplicity $1$.
For each $\lambda<2$ we denote by $\mathcal{W}_\lambda$
the set of all $W\in\mathcal{W}$
such that we can find a compact subset $K$ of $W$,
an $R>0$ and a diffeomorphism $f:C\-\,\ov{\!B_R}\to
\spt\|W\|\- K$ such that $f-i_C=O(r^{\lambda-1})$.

We suppose that $C$ is Jacobi-integrable in the sense of
Joyce~\cite[Definition~6.7]{J1}.
In a way similar to Simon~\cite[Part~II, \S\S5 and 6]{Simon2},
then, we can prove that if $W\in\mathcal{W}$ then
there exists $\lambda<2$ such that $W\in\mathcal{W}_\lambda$.

Joyce~\cite[Theorem~4.3]{J1} proves
a version of Weinstein's theorem~\cite[Corollary~6.2]{W},
which we shall recall next.
We denote by $T^*C$ the cotangent bundle over $C$,
by $0_C$ the zero-section of $T^*C$, and by $\om_C$
the canonical symplectic form on $T^*C$.
We regard $(0,\iy)$ as a multiplicative group
acting upon $C$ and $\R^{2m}$ as re-scaling.
We can lift the $(0,\iy)$-action
uniquely to $T^*C$
so that for each $t\in(0,\iy)$ we shall have
$t^*\om_C=t^2\om_C$ (on the left-hand side
we regard $t$ as a map of $C$ into itself).
We have then:
\begin{lem}\l{UC}
There exist a neighbourhood $UC$ of $\im0_C$ in $T^*C$
invariant under $(0,\iy)$, 
and a diffeomorphism $\Phi_C$ of $UC$ into $\R^{2m}$
equivariant under $(0,\iy)$
with $\Phi_C\circ0_C=i_C$ and $\Phi_C^*\om_0=\om_C$.
\end{lem}

Let $W\in\mathcal{W}_\lambda$.
Then we can take a compact subset $K$ of $W$
and a closed $1$-form $w$ on $C\-\,\ov{\!B_R}$
such that $\spt\|W\|\- K_W=\Phi_C(\gr w)$.
We denote by $\pi_\Si$ the projection of $C$ onto $\Si$, which induces a linear isomorphism $\pi_\Si^*:H^1(\Si;\R)\to H^1(C\-\,\ov{\!B_R};\R)$,
so that we may write $w=\pi_\Si^*\eta_W+dh_W$
for some $1$-form $\eta_W$ on $\Si$
and some $h_W:C\-\,\ov{\!B_R}\to\R$.
By results of Joyce~\cite[Equations~(7.7) and (7.8)]{J1} we have:
\begin{lem}\l{delta h}
If $\al<2$ and $h_W=O(r^\al)$ then
we have $\Delta_Ch_W=O(r^{2(\al-2)})$.
\end{lem}
We can extend a result of Joyce~\cite[Theorem~7.11]{J1}
as follows:
\begin{lem}\l{improvement}
Let $W\in\mathcal{W}_\lambda$,
$\lambda'<\lambda<2$ and $[\lambda',\lambda)\cap\Lambda=\emptyset$.
Then we have $W\in\mathcal{W}_{\lambda'}$.
\end{lem}
\begin{proof}
For each integer $n\geq0$ we put
$\lambda(n)=2^n(\lambda-2)+2$.
We can take a unique integer $\nu$ such that
$\lambda(\nu+1)<\lambda'\leq\lambda(\nu)$.
By an induction on $n=0,1,\cdots,\nu$,
we shall prove
$h_W=O(r^{\lambda(n)})$ for every $n=0,1,\cdots,\nu$.

By the property of $h_W$ we have $h_W=O(r^\lambda)=O(r^{\lambda(0)})$.
If $\nu=0$ we can then complete the induction automatically.
We suppose therefore $\nu>0$.
Suppose also that we have $h_W=O(r^{\lambda(n)})$
for some $n=0,1,\cdots,\nu-1$.
By Lemma~\ref{delta h} we have $\Delta h_W=O(r^{2(\lambda(n)-2)})=O(r^{\lambda(n+1)-2})$.
Since $n<\nu$ we get $\lambda(n+1)\geq\lambda'$ and so
$[\lambda(n+1),\lambda)\cap\Lambda=\emptyset$.
Applying Corollary~\ref{linearized equation} to
$h_W,\lambda,\lambda(n+1)$ in place of $u,p,q$ respectively, we get
$h_W=O(r^{\lambda(n+1)})$, completing the induction.

We have thus proved
$h_W=O(r^{\lambda(n)})$ for every $n=0,1,\cdots,\nu$.
Putting $n=\nu$ we get $h_W=O(r^{\lambda(\nu)})$.
By Lemma~\ref{delta h} we have $\Delta_Ch_W=O(r^{\lambda(\nu+1)-2})$.
By the definition of $\nu$ we have $\lambda(\nu+1)\leq\lambda'$
and so $\Delta_Ch_W=O(r^{\lambda'-2})$.
Applying Corollary~\ref{linearized equation} to
$h_W,\lambda,\lambda'$ in place of $u,p,q$ respectively, we get
$h_W=O(r^{\lambda'})$, completing the proof.
\end{proof}
Joyce~\cite[Definition~3.6]{J2} defines the stability of $C$.
We have:
\begin{thm}\l{decay order estimate}
Let $C$ be stable in the sense of Joyce.
Then there exists $b\in\R^{2m}$ such that
$W-b\in\mathcal{W}_0$.
\end{thm}
\begin{proof}
By the stability of $C$
we have $\Lambda\cap(1,2)=\emptyset$ and
$E_1=\{b\cdot x:b\in\R^{2m}\}$.
We take $\lambda\in(1,1+\ep)$.
Let $h_W$ be as above. Then by Lemma~\ref{improvement} we have
$h_W=O(r^{\lambda})$.
By Lemma~\ref{delta h}, therefore, we have
$\Delta_Ch_W=O(r^{2(\lambda-2)})$.
Applying Corollary~\ref{linearized equation} to $h_W,\lambda,2(\lambda-2)$
in place of $u,p,q$
we get $h_W=b\cdot x|_C+O(r^{2(\lambda-2)+2})$.

We may suppose that for each $t\in[0,1]$
there exist a compact subset $K_t$ of $\R^{2m}$ and
a $1$-form $w_t$ on $C\-\,\ov{\!B_R}$ such that
$(\spt\|W\|-tb)\- K_t
=\Phi_C(\gr w_t)$.
We put $\be=(b\cdot x)\circ\Phi_C$.
We have then a function $\be:UC\to\R$.
Notice that $\Phi_C^{-1}(W-b)$ is the image of the time-one map of
the flow generated by $d\be\lrcorner\om_C$. 
Then we have
$\d w_t/\d t=-w_t^*d\be$
and so $w_1=w_0-d\int_0^1w_t^*\be dt
=\pi_\Si^*\eta_W+d(h_W-\int_0^1w_t^*\be dt)$.
Hence we get
$h_{W-b}=h_W-\int_0^1w_t^*\be dt=h_W-\be|_C-
\int_0^1(w_t^*\be-\be|_C)dt=O(r^{2\lambda-2})+O(r^{\lambda-1})
=O(r^{2\lambda-2})$.
By results of Joyce~\cite[Equations~(7.7) and (7.8)]{J1}
we have
$\Delta_Ch_{W-b}=O(r^{4\lambda-8})$.
Applying Corollary~\ref{linearized equation} to $h_{W-b},2\lambda-2,4\lambda-8$
in place of $u,p,q$
we get $c\in\R$ such that
$h_{W-b}=c+O(r^{4\lambda-6})=O(r^0)$
as we may suppose $4\lambda-6<0$.
This completes the proof.
\end{proof}
Let $\lambda<2$ and $W\in\mathcal{W}_\lambda$.
Take a compact subset $K_W$ of $\R^{2m}$,
an $R>0$ and a diffeomorphism $f_W:C\-\,\ov{\!B_R}\to\spt\|W\|\- K_W$
with $f_W-i_C=O(r^{\lambda-1})$.
Then we have a Riemannian metric $f_W^*g_0$ over $C\-\,\ov{\!B_R}$.
With respect to $f_W^*g_0$ we can define the Laplacian
$\Delta_W:C^\iy(C\-\,\ov{\!B_R};\R)
\to C^\iy(C\-\,\ov{\!B_R};\R)$.
We have then:
\begin{proposition*}
If $u\in C^\iy(C\-\,\ov{\!B_R};\R)$ and $u=O(r^\al)$
then we have
$\Delta_Wu=\Delta_Cu+O(r^{\al+\lambda-4})$.
\end{proposition*}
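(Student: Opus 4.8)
The plan is to set $g' := f_W^{*}g_0$, write $g' = g_C + h$, show that the perturbation $h$ decays at the rate $O(r^{\lambda-2})$, and then propagate this estimate through the identity $\Delta_g u = \operatorname{tr}_g\operatorname{Hess}_g u$. Throughout I would use freely that the spaces of $O(r^{\gamma})$ tensor fields on $C\setminus\overline{B_R}$ are closed under tensor products, under contraction with $g_C$ and with $g_C^{-1}$, and under the Levi--Civita derivative $\nabla$ of $g_C$, the exponents adding and each application of $\nabla$ lowering the exponent by one (the last being built into the definition of $O(r^{\gamma})$).

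First I would analyse $h$. By the defining property of $\mathcal{W}_\lambda$ there is a diffeomorphism $f_W : C\setminus\overline{B_R}\to\spt\|W\|\setminus K_W$ with $\phi := f_W - i_C = O(r^{\lambda-1})$, whence $d\phi = O(r^{\lambda-2})$. Since $i_C$ is homogeneous of degree $1$ we have $di_C = O(r^0)$, so $df_W = di_C + d\phi = O(r^0)$ (using $\lambda < 2$). Writing $\langle\cdot,\cdot\rangle$ for the Euclidean inner product on the $\mathbb{R}^{2m}$ factor we have $g'(X,Y) = \langle df_W(X), df_W(Y)\rangle$ and $g_C(X,Y) = \langle di_C(X), di_C(Y)\rangle$, so
\[
h = g' - g_C = \langle d\phi, df_W\rangle + \langle di_C, d\phi\rangle = O(r^{\lambda-2}),
\]
which is automatically a symmetric $2$-tensor. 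Because $\lambda < 2$ this tensor decays, so after possibly enlarging $R$ the metric $g'$ is nondegenerate on $C\setminus\overline{B_R}$; a Neumann series then gives $(g')^{-1} - g_C^{-1} = O(r^{\lambda-2})$, and the Koszul formula shows that the difference of Levi--Civita connections $A := \nabla^{g'} - \nabla^{g_C}$, a $(1,2)$-tensor of the schematic shape $\tfrac12 (g')^{-1}\ast\nabla h$, satisfies $A = O(r^{\lambda-3})$.

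Then I would compute $\Delta_W u - \Delta_C u$. From $\operatorname{Hess}_g u(X,Y) = X(Yu) - (\nabla^g_X Y)u$ one gets $\operatorname{Hess}_{g'}u - \operatorname{Hess}_{g_C}u = -\,du\bigl(A(\cdot,\cdot)\bigr)$, and since $du = O(r^{\alpha-1})$ and $A = O(r^{\lambda-3})$ this $2$-tensor is $O(r^{\alpha+\lambda-4})$. Decomposing,
\[
\Delta_W u - \Delta_C u = \operatorname{tr}_{g'}\!\bigl(\operatorname{Hess}_{g'}u - \operatorname{Hess}_{g_C}u\bigr) + \bigl((g')^{-1} - g_C^{-1}\bigr)\cdot\operatorname{Hess}_{g_C}u ,
\]
the second term being the full contraction of the two symmetric $2$-tensors. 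The first summand is the contraction of an $O(r^{\alpha+\lambda-4})$ tensor by $(g')^{-1} = O(r^0)$, hence $O(r^{\alpha+\lambda-4})$; the second is the contraction of $\operatorname{Hess}_{g_C}u = O(r^{\alpha-2})$ by $(g')^{-1} - g_C^{-1} = O(r^{\lambda-2})$, again $O(r^{\alpha+\lambda-4})$. Since $\Delta_W = \Delta_{g'}$ and $\Delta_C = \Delta_{g_C}$ by definition -- and since $\lambda < 2$ makes the error genuinely of lower order than $\Delta_C u = O(r^{\alpha-2})$ -- this is the assertion.

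I do not anticipate a real obstacle; the content is the weighted bookkeeping, and the only point needing care is that the relative rate genuinely drops by $\lambda - 2$ at each stage -- that $\phi = O(r^{\lambda-1})$ forces $h = O(r^{\lambda-2})$, $(g')^{-1} - g_C^{-1} = O(r^{\lambda-2})$ and $A = O(r^{\lambda-3})$, rather than anything weaker. This rests on $di_C = O(r^0)$ (valid because $i_C$ is homogeneous of degree $1$) together with the convention that $\nabla$ lowers the $O(r^{\gamma})$-exponent by one; granted the closure properties of the $O(r^{\gamma})$-calculus recorded above, both steps are then immediate.
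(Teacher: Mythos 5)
Your proposal is correct and follows essentially the same route as the paper, whose own proof is just the three-line version of your computation: $f_W^*g_0=i_C^*g_0+O(r^{\lambda-2})$, hence $\nabla_W=\nabla_C+O(r^{\lambda-3})$, hence $\Delta_Wu=\Delta_Cu+O(r^{\alpha+\lambda-4})$. You merely fill in the details (the Neumann series for the inverse metric and the split of the error into the connection-difference and trace-difference terms), both of which land on the same exponent $\alpha+\lambda-4$.
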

\begin{proof}
Since $f_W-i_C=O(r^{\lambda-1})$ we get
$f_W^*g_0=i_C^*g_0+O(r^{\lambda-2})$.
We denote by $\nabla_W$ and $\nabla_C$
the Levi-Civita connexions over $C\-\,\ov{\!B_R}$ with respect to $f_W^*g_0$
and $i_C^*g_0$ respectively.
We have then $\nabla_W=\nabla_C+O(r^{\lambda-3})$
and so $\Delta_Wu=\Delta_Cu+O(r^{\al+\lambda-4})$
as we want.
\end{proof}
We give a corollary to this:
\begin{cor}\l{improvement2}
If $u\in C^\iy(C\-\,\ov{\!B_R};\R)$,
$\Delta_Wu=0$ and $u=O(r^0)$
then we have $u=c+O(r^{2-m})$ for some $c\in\R$.
\end{cor}
\begin{proof}
By the proposition above we have
$\Delta_Cu=O(r^{0+\lambda-4})=O(r^{\lambda-4})$.
If $\lambda-2<2-m$ then we can complete the proof
by applying Corollary~\ref{linearized equation} to $0,\lambda-2$
in place of $p,q$ respectively.

We suppose therefore $\lambda-2\geq2-m$.
We take $\lambda'\in(\lambda,2)$ such that $\lambda'-2>2-m$.
Applying Corollary~\ref{linearized equation} to $0,\lambda'-2$
in place of $p,q$ respectively, we get $u=c+O(r^{\lambda'-2})$
for some $c>0$.
We have thus improved the decay order estimate for $u$,
and so we can complete the proof in a way similar to the proof of
Lemma~\ref{improvement}.
\end{proof}

We suppose now
\[C=\{(z_1,\cdots,z_m)\in\C^m\-\{0\}:
|z_1|=\cdots=|z_m|,z_1\cdots z_m\in(0,\iy)\}.\]
This is an extension of \eqref{C} to dimension $m$.
Harvey and Lawson~\cite[Chapter~III.3.A, Theorem~3.1]{HL}
prove that $C$ is a special Lagrangian submanifold of $(\R^{2m}\-\{0\},
\om_0,J_0,\Om_0)$.

We define a $T^{m-1}$-action on $\C^m$ as follows.
We write $T^{m-1}=S^1\times\cdots\times S^1$ and $S^1=\{t\in\C:|t|=1\}$.
For each $j\in\{2,\cdots,m\}$
we define the $j$-th $S^1$-action on $\C^m$ by setting
$t\cdot(z_1,\cdots,z_j,\cdots,z_m)=(tz_1,\cdots,t^{-1}z_j,\cdots,z_m)$
for each $(z_1,\cdots,z_m)\in\C^m$.
This action preserves the subset $C\su\C^m$.

We also define a map $\mu_j:\C^m\to\R$ by
$2\mu_j(z_1,\cdots,z_m)=|z_1|^2-|z_j|^2$.
We shall identify $\R$ with the Lie algebra of $S^1$
so that $\mu_j$ will be the moment map on $(\C^m,\om_0)$
with respect to the $S^1$-action.
One important property is that $\mu_j|_C\=0$.

We prove:
\begin{thm*}
Let $m>2$. If $W\in\mathcal{W}_0$
and $j\in\{2,\cdots,m\}$ then
we have $\grad_{TW}\mu_j=0$
almost everywhere on $\R^{2m}$
with respect to $\|W\|$.
\end{thm*}
\begin{proof}
We can take a compact subset $K_W$ of $\R^{2m}$,
an $R>0$ and a diffeomorphism $f_W:C\-\,\ov{\!B_R}\to
\spt\|W\|\- K_W$ with $f_W-i_C=O(r^{-1})$.
Since $\mu_j=O(r^2)$ and $\mu_j\circ i_C=0$ we get
$f_W^*\mu_j=\mu_j\circ f_W=\mu_j\circ f_W-\mu_j\circ i_C
=\int_0^1d\mu_j|_{(1-t)f_W+t i_C}(f_W-i_C)dt
=O(r^{2-1}r^{-1})=O(r^0)$ and
so $f_W^*\mu_j=O(r^0)$.
By a result of Joyce~\cite[Lemma~3.4]{J2} we have
$\Delta_Wf_W^*\mu_j=0$ for each $j=2,\cdots,m$.
By Corollary~\ref{improvement2} we can find
$c_j\in\R$ such that
$f_W^*\mu_j-c_j=O(r^{2-m})$.
Putting $\mu_j'=\mu_j-c_j$ we get
$f_W^*\mu_j'=O(r^{2-m})$.
We have clearly $\grad\mu_j'=\grad\mu_j$.

Take a smooth function
$\chi:\R\to[0,1]$ with $\chi=1$ on $B_1$ and $\chi=0$ on $\R^6\-B_2$.
Let $R>0$, and define a function $\chi_R:\R^{2m}\to[0,1]$
by setting $\chi_R(x)=\chi(|x|/R)$.
Since $W$ has first variation $0$ in $B_{2R}$ 
we get then
\[\int_{B_{2R}}\di_{TW}(\chi_R\mu_j'\grad\mu_j)d\|W\|=0.\]
Also by a result of Joyce~\cite[Lemma~3.4]{J2} we have
$\di_{TW}\grad\mu_j=0$ and so
\begin{equation}\l{mu grad mu}
\int_{B_{2R}}\mu_j'(\grad\chi_R,\grad_{TW}\mu_j)
+\chi_R|{\grad_{TW}\mu_j}|^2d\|W\|=0.
\end{equation}
Notice that
$(\grad\chi_R,\grad_{TW}\mu_j)
=(d\chi_R,d\mu_j|_{\spt\|W\|\- K_W})$
on $\spt\|W\|\- K_W$.
Take $R$ sufficiently large so that
$K_W\subset B_R$.
Then we have $\grad\chi_R=0$ on $K_W$
and so
$(\grad\chi_R,\grad_{TW}\mu_j)
=(d\chi_R,d\mu_j|_{\spt\|W\|\- K_W})$
on $\spt\|W\|$.
We have therefore
\[\int_{B_{2R}}-\mu_j'(\grad\chi_R,\grad_{TW}\mu_j)d\|W\|
\leq\sup_{\spt\|W\|\- K_W}|\mu_j'(d\chi_R,d\mu_j)|
\int_{B_{2R}}d\|W\|\]
and so by \eqref{mu grad mu} we have
\begin{equation}\l{mu grad}
\int_{B_{2R}}\chi_R|{\grad_{TW}\mu_j}|^2d\|W\|
\leq\sup_{\spt\|W\|\- K_W}|\mu_j'(d\chi_R,d\mu_j)|
\int_{B_{2R}}d\|W\|.
\end{equation}
Since $f_W:C\-\,\ov{\!B_R}\to\spt\|W\|\- K_W$
is a diffeomorphism we get
\begin{equation}
\sup_{\spt\|W\|\- K_W}|\mu_j'(d\chi_R,d\mu_j)|=
\sup_{C\-\,\ov{\!B_R}}|f_W^*\mu_j'(df_W^*\chi_R,df_W^*\mu_j)|.
\end{equation}
Since $\chi_R=\chi(r/R)$
we get $|d\chi_R|\leq kR^{-1}$ for some $k>0$
independent of $R$.
Since $f_W^*\mu_j'=O(r^{2-m})$ we get
\begin{equation}\l{grad mu2}
\sup_{C\-\,\ov{\!B_R}}|f_W^*\mu_j'(d\chi_R,df_W^*\mu_j)|
\leq kR^{2-m}R^{-1}R^{1-m}=kR^{2-2m}
\end{equation}
for some $k>0$ independent of $R$.
By \eqref{mu grad}--\eqref{grad mu2} we have
\[\int_{B_{2R}}\chi_R|{\grad_{TW}\mu_j}|^2d\|W\|
\leq kR^{2-2m}
\int_{B_{2R}}d\|W\|.\]
On the other hand, by the monotonicity formula,
we have $\int_{B_{2R}}d\|W\|\leq kR^m$
for some $k>0$ depending only on $m$ and $T_\iy W$.
We have therefore
\[\int_{B_{2R}}\chi_R|{\grad_{TW}\mu_j}|^2d\|W\|\leq kR^{2-m},
k>0\text{ independent of }R.\]
Letting $R\to\iy$ we get $\int_{\R^{2m}}|{\grad_{TW}\mu_j}|^2d\|W\|=0$ because $m>2$,
completing the proof.
\end{proof}
We give a corollary to the theorem above.
We define a map $f:\C^m\to\C$ by
setting $f(z_1,\cdots,z_m)=i^{m+1}z_1\cdots z_m$
for each $(z_1,\cdots,z_m)\in\C^m$.
We put $\im f=(f-\bar{f})/2i:\C^m\to\R$
and $F=(\mu_2,\cdots,\mu_m,\im f):
\C^m\to\R^m$.
We have then:
\begin{cor}
\l{T_zV}
If $W\in\mathcal{W}_0$ then we have
$T_yW=\Ker dF|_y$
for $\|W\|$-almost every $y\in\R^{2m}$.
\end{cor}
\noindent
This follows readily from the proof of
Harvey and Lawson
\cite[Chapter~III.3.A, Theorem~3.1]{HL}.

We have moreover:
\begin{cor}\l{F}
For every $W\in\mathcal{W}_0$ there exists $c\in\R^m$
such that $F=c$ on $\spt\|W\|$.
\end{cor}
\begin{proof}
By Corollary~\ref{T_zV} we have
$dF|_{\spt\|W\|\- K_W}=0$,
and $F|_{\spt\|W\|\- K_W}$ is therefore locally constant.
Since $\spt\|W\|\- K_W\cong C\-\,\ov{\!B_R}\cong(R,\iy)\times T^{m-1}$
we see that $\spt\|W\|\- K_W$ is connected, and
$F|_{\spt\|W\|\- K_W}$ is therefore constant;
i.e. we have $F|_{\spt\|W\|\- K_W}=c$ for some $c\in\R^m$.

Put $\phi=|F-c|^2$.
Then we have $\phi=0$ on $\spt\|W\|\- K_W$
and so $\spt\phi\cap\spt\|W\|\subset K_W$.
By Corollary~\ref{T_zV} we have $\grad_{TW}\phi=0$
almost everywhere on $\R^{2m}$ with respect to $\|W\|$.

We shall now use a result of
Michael and Simon~\cite[Theorem~2.1]{Michael Simon},
who prove a
Poincar\'e--Sobolev inequality for varifolds;
we refer also to Simon~\cite[Theorem~18.6]{Simon3},
who uses varifolds more explicitly.
We are going to use the following version:
\begin{lem*}
Let $W$ be a stationary integral varifold of dimension $m$ in $(\R^n,g_0)$.
Suppose that we have a smooth function $\phi:\R^n\to[0,\iy)$
with $\spt\phi\cap\spt\|W\|$ compact and $\grad_{TW}\phi=0$
almost everywhere on $\R^n$
with respect to $\|W\|$.
Then we have $\phi=0$ on $\spt\|W\|$.
\end{lem*}
We give a proof for the sake of clarity:
\begin{proof}
We define $r:\R^n\to[0,\iy)$
by setting $r(y)=|y|$.
We can then define
$\d_r=\d/\d r$ as a smooth vector field on $\R^n\-\{0\}$.
It is easy to see that $r\d_r$
extends smoothly to $\R^n$.
Let $\phi$ be as above, and let $\chi$ be a compactly-supported
smooth function on $\R^n$ with $\chi=1$ on $\spt\phi\cap\spt\|W\|$.
Then we can define
$\phi r\d_r$ as a smooth vector field on $\R^n$.
Since $W$ has first variation $0$ we get
\[\int_{\R^n}
\di_{TW}\chi\phi r\d_rd\|V\|=0.\]
Since $\chi=1$ on $\spt\phi\cap\spt\|W\|$ we get
$\int_{\spt\phi\cap\spt\|W\|}\di_{TW}\phi r\d_rd\|W\|=0$.
Since $\grad_{TW}\phi=0$ on $\spt\|W\|$ we get
$\int_{\spt\phi\cap\spt\|W\|}\phi\di_{TW}r\d_rd\|W\|=0$.
Since $\di_{TW}r\d_r=m$ we get
$\int_{\spt\phi\cap\spt\|W\|}m\phi d\|W\|=0$.
Since $\phi\geq0$ we get $\phi=0$ on $\spt\phi\cap\spt\|V\|$,
completing the proof.
\end{proof}
Hence we get $\phi=|F-c|^2=0$ on $\spt\|W\|$,
completing the proof of Corollary~\ref{F}.
\end{proof}

We suppose now $m=3$.
For the fibres of $F:\C^3\to\R^3$
we have an explicit description of
the topology and asymptotic behaviour at infinity,
which we shall use next.
The behaviour of $F:\C^m\to\R^m$
is rather complicated if $m>3$, which we shall not discuss.

We put $Y=\{(a,0,0)\in\R^3:a\geq0\}\cup\{
(0,a,0)\in\R^3:a\geq0\}\cup\{(-a,-a,0)\in\R^3:a\geq0\}$.
We note that if $c\in\R^3\- Y$
then $F^{-1}(c)$ has no fixed point with respect to the $T^2$-action
\[(e^{i\theta},e^{i\phi})\cdot(z_1,z_2,z_3)=(e^{i\theta}z_1,
e^{i\phi}z_2,e^{-i\theta-i\phi}z_3).\]

We have:
\begin{proposition}\l{c in Y}
Let $W\in\mathcal{W}$
and suppose $\spt\|W\|\subset F^{-1}(c)$ for some $c\in\R^3$.
Then we have $c\in Y$.
\end{proposition}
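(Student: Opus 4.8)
The plan is to argue by contradiction: suppose $c\notin Y$ and derive that $W$ cannot be asymptotic at infinity to $C$ with multiplicity $1$. No information on the decay rate of $W$ is needed — only that $W$ is a special Lagrangian integral varifold with $\partial\overrightarrow{W}=0$ whose support, outside a compact set, is a single multiplicity-$1$ graph over $C\setminus\overline{B_R}$. The first ingredient is the structure of $F^{-1}(c)$ when $c\notin Y$: as noted just before the statement, the $T^2$-action on $F^{-1}(c)$ then has no fixed point, so no point of $F^{-1}(c)$ has two vanishing coordinates, and by the construction of Harvey and Lawson~\cite[Chapter~III.3.A, Theorem~3.1]{Harvey and Lawson} the fibre is therefore a smooth, connected, properly embedded special Lagrangian submanifold of $\mathbb{R}^6$, diffeomorphic to $T^2\times\mathbb{R}$, with exactly two ends: one asymptotic at infinity to $C$ and the other to $C_-:=\{(z_1,z_2,z_3):|z_1|=|z_2|=|z_3|,\ z_1z_2z_3\in(-\infty,0)\}$. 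Here $C_-=A(C)$ for the unitary map $A(z_1,z_2,z_3)=(-z_1,z_2,z_3)$, so $C_-$ is a cone congruent to $C$, with $\area(C_-\cap B_1)=\area(C\cap B_1)$ and with $C_-\cap S^5$ disjoint from $C\cap S^5$ (on the latter the product of the coordinates is positive, on the former negative). That the fibre is non-compact — equivalently that the base curve of the fibration over $c$ is a line, not a circle — is automatic, since a compact special Lagrangian variety in $\mathbb{R}^6$, being homologically area-minimizing in $H_3(\mathbb{R}^6;\mathbb{Z})=0$, has zero mass.

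Next I would note that $\spt\|W\|$ is connected: outside some compact $K$ it is diffeomorphic to $C\setminus\overline{B_R}$, hence connected, and were it to split into two relatively closed pieces, one of them would lie inside $K$, hence be compact, so its contribution to $\overrightarrow{W}$ would be a compactly supported integral cycle calibrated by $\re\Omega_0$ — of zero mass, as above. The crux is then the following. Since $\spt\|W\|\subseteq F^{-1}(c)$ and $F^{-1}(c)$ is a $3$-dimensional $C^1$ submanifold, the approximate tangent plane of $W$ equals the tangent plane of $F^{-1}(c)$ at $\|W\|$-almost every point, so $\overrightarrow{W}$ is an integral $3$-current with $\partial\overrightarrow{W}=0$ supported in the connected manifold $F^{-1}(c)$; by the constancy theorem (see e.g.\ Simon~\cite[Theorem~26.27]{Simon3} or Federer~\cite[4.1.7]{Federer}), $\overrightarrow{W}=k\,[F^{-1}(c)]$ for a single nonzero integer $k$, whence $\|W\|=|k|\,\mathcal{H}^3\llcorner F^{-1}(c)$. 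In particular $\spt\|W\|=F^{-1}(c)$, which has an end asymptotic to $C_-$. But $W$ is asymptotic to $C$ with multiplicity $1$, so there are a compact $K$ and $R>0$ with $\spt\|W\|\setminus K$ diffeomorphic to $C\setminus\overline{B_R}$, which has a single end, modelled on $C\ne C_-$, of multiplicity $1$ — and equivalently the density of $\|W\|$ at infinity would be $2|k|\,\area(C\cap B_1)$ rather than $\area(C\cap B_1)$. This contradiction proves $c\in Y$.

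The step I expect to require the most care is the passage from $\spt\|W\|\subseteq F^{-1}(c)$ to $\|W\|=|k|\,\mathcal{H}^3\llcorner F^{-1}(c)$: a priori $W$ might be supported in only a proper portion of the fibre, and it is precisely the hypothesis $\partial\overrightarrow{W}=0$, together with the connectedness of $F^{-1}(c)$ for $c\notin Y$, that rules this out via the constancy theorem. The remaining inputs are either Harvey and Lawson's explicit description of the fibres of $F$ or routine geometric-measure-theory bookkeeping.
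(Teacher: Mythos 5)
Your proof is correct and follows essentially the same route as the paper: assume $c\notin Y$, use the structure of the fibre (the paper's Lemma~\ref{RS^1S^1}: $F^{-1}(c)$ is a connected submanifold diffeomorphic to $\mathbb{R}\times S^1\times S^1$ asymptotic at infinity to $C\cup-C$ with multiplicity $1$), apply the constancy theorem to force $\|W\|$ to be a constant multiple of $\mathcal{H}^3\llcorner F^{-1}(c)$, and contradict the hypothesis that $W$ is asymptotic to $C$ alone with multiplicity $1$. The extra details you supply (connectedness of $\spt\|W\|$, vanishing of compact calibrated cycles in $\mathbb{R}^6$) are correct but not needed beyond what the paper's version of the argument already uses.
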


For the proof we shall use:
\begin{lem}\l{RS^1S^1}
If $c\in\R^3\- Y$
then $F^{-1}(c)$ is a submanifold of $\R^6$
diffeomorphic to $\R\times S^1\times S^1$
and asymptotic to $C\cup-C$ with multiplicity $1$ at infinity.
\end{lem}
\begin{proof}
We put $c=(c_1,c_2,c_3)$.
We may suppose $c_1\geq c_2\geq0$ without loss of generality.
Since $(c_1,c_2,c_3)\in\R^3\- Y$ we get $c_3\neq0$ or $c_2>0$.
For each $t\in\R$ we can find a unique $\phi_c(t)\in[0,\iy)$
such that
\[(\phi_c(t)+c_1)
(\phi_c(t)+c_2)\phi_c(t)=|t+ic_3|^2.\]
It is easy to see that $\phi_c(t)$ depends smoothly on $t$.
We put $\psi_c(t)=\sq{\phi_c(t)+c_1}
\sq{\phi_c(t)+c_2}$.
Since $c_3\neq0$ or $c_2>0$ we get
$\psi_c(t)>0$ for every $t\in\R$,
and so we can define $\bigl(\psi_c(t)\bigr)^{-1}>0$
for every $t\in\R$.
Define a smooth map $\Phi_c:\R\times S^1\times S^1\to F^{-1}(c)$
by setting
\[\Phi_c(t,u,v)=(\sq{\phi_c(t)+c_1}u,\sq{\phi_c(t)+c_2}v,
\frac{t+ic_3}{\psi_c(t)uv}),\]
and a smooth map $\Psi_c(t):F^{-1}(c)\to\R\times S^1\times S^1$
by setting
\[\Psi_c(z_1,z_2,z_3)=(\re z_1z_2z_3,\frac{z_1}{(\phi_c(\re z_1z_2z_3)+c_1)^{1/2}},
\frac{z_2}{(\phi_c(\re z_1z_2z_3)+c_2)^{1/2}}).\]
Then $\Psi_c\circ\Phi_c$ is clearly the identity map of
$\R\times S^1\times S^1$.
It is also easy to see that $\Phi_c\circ\Psi_c$
is the identity map of $F^{-1}(c)$,
and so $F^{-1}(c)$ is a submanifold of $\C^3$
diffeomorphic to $\R\times S^1\times S^1$.
It is also easy to see that $F^{-1}(c)$
is asymptotic to $C\cup-C$ with multiplicity $1$ at infinity.
\end{proof}

\begin{proof}[Proof of Proposition~$\ref{c in Y}$]
We give a proof by contradiction,
and so suppose $c\notin Y$.
By Lemma~\ref{RS^1S^1}, then,
$F^{-1}(c)$ will be a connected submanifold of $\R^6$.
By a constancy theorem of Allard~\cite[Theorem~4.6(3)]{Allard}
or Simon~\cite[Theorem~41.1]{Simon3},
therefore,
the multiplicity function $\Theta_W$ will be constant on $F^{-1}(c)$ (we recall that $\Th_W$ is characterized by the condition $\Th_W\cH^3=\|W\|$ where $\cH^3$ denotes the $3$-dimensional Hausdorff measure on $\R^6$).
On the other hand we have $\Theta_W=1$ near infinity on $\spt\|W\|$
and so we shall have $\Theta_W=1$ on $F^{-1}(c)$,
which implies $\|W\|=|F^{-1}(c)|$.
By Lemma~\ref{RS^1S^1}, however,
$F^{-1}(c)$ is asymptotic at infinity to $C\cup-C$
with multiplicity $1$,
which contradicts that $W$ is asymptotic at infinity to $C$
with multiplicity $1$.
This completes the proof.
\end{proof}

Suppose now that $c\in Y\subset\R^2\times\{0\}$.
We prove:
\begin{proposition}\l{fibre}
Let $W\in\mathcal{W}$ and suppose
$\spt\|W\|\subset F^{-1}(c)$ for some $c\in Y\su\R^2\t\{0\}$.
Then we have
$\|W\|=|sL|$
for some $s>0$ and $L\in\{C,L_1,L_2,L_3\}$.
\end{proposition}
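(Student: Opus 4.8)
The plan is to refine the constancy--theorem argument from the proof of Proposition~\ref{c in Y}, now carried out on the degenerate fibre over $c\in Y$. Fix $c\in Y$, write $F^{-1}(c)=L_c\cup L_c^-$ with $L_c^-=F^{-1}(c)\cap\{z_1z_2z_3\le0\}$, and set $Z_c=\{0\}$ if $c=0$ and $Z_c=F^{-1}(c)\cap\{z_1z_2z_3=0\}$ otherwise. On $F^{-1}(c)$ at least one coordinate never vanishes, so in the latter case $Z_c$ is a single circle lying on one of the coordinate axes --- the $T^2$-fixed locus whose being met by $F^{-1}(c)$ is exactly the condition $c\in Y$ --- and in both cases $\mathcal{H}^3(Z_c)=0$.

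The first step, which I expect to be where the real work lies, is to read off from the Harvey--Lawson parametrisation of $F$ the structure of $F^{-1}(c)$: (i) $L_c\setminus Z_c$ and $L_c^-\setminus Z_c$ are disjoint, connected, relatively closed, embedded $3$-dimensional submanifolds of $\mathbb{R}^6\setminus Z_c$ (for $c=0$ they are $C$ and $-C$); (ii) $L_c=sL$ for some $s>0$ and some $L\in\{C,L_1,L_2,L_3\}$ --- with $L=C$ when $c=0$ and $L$ the relevant one of $L_1,L_2,L_3$ when $c$ lies on a ray of $Y$, as recorded with \eqref{123}; (iii) near infinity $L_c$ is the multiplicity-$1$ sheet of $F^{-1}(c)$ asymptotic to $C$ while $L_c^-$ is asymptotic to $-C$, so that, since $C\cap S^5$ and $(-C)\cap S^5$ are disjoint compact subsets of $S^5$, points of $L_c^-$ at radius $\rho$ stay at distance at least $\mathrm{const}\cdot\rho$ from $C$ as $\rho\to\infty$. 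Lemma~\ref{RS^1S^1} and the computations behind it already supply essentially all the input; one only has to keep track of the two ends and of the pinching locus $Z_c$.

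Granting (i)--(iii), the argument proceeds as in Proposition~\ref{c in Y}. Since $W$ is integral and $\mathcal{H}^3(Z_c)=0$ we have $\|W\|(Z_c)=0$, so the restriction of $W$ to the open set $\mathbb{R}^6\setminus Z_c$ is a stationary integral varifold supported in $(L_c\setminus Z_c)\sqcup(L_c^-\setminus Z_c)$. Separating these two disjoint relatively closed sets by disjoint open subsets of $\mathbb{R}^6\setminus Z_c$ and applying the constancy theorem of Allard~\cite[Theorem~4.6(3)]{Allard} (or Simon~\cite[Theorem~41.1]{Simon3}) on each, one gets nonnegative integers $p$ and $q$ with $\|W\|=p\,|L_c|+q\,|L_c^-|$.

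It remains to show $p=1$ and $q=0$, which uses $W\in\mathcal{W}$. By definition there are a compact set $K\supseteq Z_c$, an $R>0$ and a diffeomorphism $f:C\setminus\overline{B_R}\to\spt\|W\|\setminus K$ asymptotic to the inclusion $i_C$; hence $\spt\|W\|\setminus K$ is a single multiplicity-$1$ sheet whose points at radius $\rho$ lie within $o(\rho)$ of $C$. By (iii) such points cannot lie on $L_c^-$ for $\rho$ large, so $\spt\|W\|\setminus K\subseteq L_c\setminus Z_c$, where moreover $\Theta_W=1$; by the constancy just used, $p=1$. If $q\ge1$, then $L_c^-\setminus(Z_c\cup K)$ would be an unbounded subset of $\spt\|W\|\setminus K$ asymptotic to $-C$, contradicting the asymptoticity of $\spt\|W\|\setminus K$ to $C$; hence $q=0$. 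Therefore $\|W\|=|L_c|$, which by (ii) equals $|sL|$, completing the proof.
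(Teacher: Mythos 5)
Your proof is correct and follows essentially the same route as the paper: split $F^{-1}(c)$ into the two connected smooth sheets on which $z_1z_2z_3$ has a fixed sign, apply the constancy theorem on each to get $\|W\|=p\,|L_c|+q\,|L_c^-|$, and use the asymptotics ($C$ versus $-C$ at infinity) to force $p=1$, $q=0$. The only difference is cosmetic: you excise the $\mathcal{H}^3$-null pinching locus $Z_c$ outright, whereas the paper exhausts by the open sets $\{\re z_1z_2z_3>\epsilon\}$ and $\{\re z_1z_2z_3<-\epsilon\}$ and lets $\epsilon\to0$; both devices handle the degenerate circle equally well.
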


\begin{proof}
We treat the two cases $c=\bm0,$ $c\ne\bm0$ individually but in both cases the main tool is the constancy theorem (which we have already used in the proof of Proposition \ref{c in Y}).

If $c=\bm0\in Y\su\R^3$ then we have $F^{-1}(\bm0)=C\cup\{\bm0\}\cup-C$. Let $U_+:=\{\re z_1z_2z_3>0\}$ and $U_-:=\{\re z_1z_2z_3<0\}$, which are open subsets of $\C^3$. Then we have
$F^{-1}(\bm0)\cap U_+=C$ and 
$F^{-1}(\bm0)\cap U_-=-C$, which are submanifolds of $U_+$ and $U_-$ respectively.
Hence by the constancy theorem we find some integers $n_{\pm}\ge0$ such that $\|W\|\llcorner U_\pm=n_\pm|F^{-1}(0)\cap U_\pm|$ where $\|W\|\llcorner U_\pm$ denotes the Radon measure on $\R^6$ with $\|W\|\llcorner U_\pm(A)=\|W\|(U_\pm\cap A)$ for $A\su\R^6$. On the other hand $W$ is asymptotic at infinity to $C$, and so $n_+=1$ and $n_-=0$, which implies $\|W\|=|C|$ as we want.

We turn now to the case $c\ne\bm0.$ One easily sees that $F^{-1}(c)$ may be written as the union of two $S^1\t\R^2$ intersecting at $S^1\t\{\bm0\}$ and $F^{-1}(c)\cap U_+$, $F^{-1}(c)\cap U_-$ are diffeomorphic to $S^1\t(\R^2\-\{\bm0\}).$ Thus $F^{-1}(c)$ is topologically different from $F^{-1}(\bm0)$, but otherwise the treatment above is valid with $F^{-1}(c)$ in place of $F^{-1}(\bm0)$, which implies $\|W\|=|F^{-1}(c)\cap U_+|$.
Finally from \eq{L1}--\eq{L3}, the definition of $L_1,L_2,L_3,$ it follows readily that the closure of $F^{-1}(c)\cap U_+$ in $\R^6$ is equal to $sL$ for some $s>0$ and $L\in\{L_1,L_2,L_3\}$, which completes the proof of Proposition \ref{fibre}.
\end{proof}

We shall now complete the proof of Theorem~\ref{uniqueness}.
We suppose that $W$ is as in Theorem~\ref{uniqueness}.
By Theorem~\ref{decay order estimate}, then,
we can find $b\in\R^6$ such that $W-b\in\mathcal{W}_0$.
By Corollary \ref{F}, therefore,
we can find $c\in\R^3$
such that $\spt\|W-b\|\subset F^{-1}(c)$.
By Proposition~\ref{c in Y}, therefore, we have $c\in Y$.
By Proposition~\ref{fibre}, therefore, we have
$\|W-b\|=|sL|$ for some $s>0$ and $L\in\{C,L_1,L_2,L_3\}$
as we want.


\section{Combining Results of \S\S\ref{bu}--\ref{un}}\l{bg1}
As in \S\ref{intro} the proof of Theorems \ref{main2} and \ref{main3}, the main theorems of the present paper, is based on the results of \S\S\ref{bu}--\ref{un} above.
In the present section therefore we combine those results into a convenient form (Theorem \ref{BG} below).
We begin by recalling some basic notation from \S\ref{intro}.

Let $(M,\om,J,\Om)$ be an almost Calabi--Yau manifold of complex dimension $3$.
Let $\cV$ denote the space of all compactly-supported special Lagrangian integral varifolds with no boundary in $M$.

Let $\cX$ denote the subspace of $\cV$ consisting of compact special Lagrangian $3$-folds in $M$ with only one singular point modelled on $C$ with multiplicity $1$ where $C$ denotes the $T^2$-cone in $\C^3$ given by \eq{C}.

Let $L_1,L_2,L_3$ be as in \eq{L1}--\eq{L3}, which are non-compact special Lagrangian submanifolds properly-embedded in $\C^3$ and asymptotic at infinity to $C$ with multiplicity $1$.
For each $s>0$ and $L\in\{L_1,L_2,L_3\}$ we write $sL:=\{s\bm z\in\C^3:\bm z\in L\}$.

In the notation above the main result of the present section may be stated briefly as follows:
\begin{thm}\l{bg}
Let $X\in\cX$ and let $x$ denote the unique singular point of $X$.
Then there exist a neighbourhood $B_x$ of $x$ in $X$ and a neighbourhood $\cU$ of $X$ in $\cV$ such that if $V\in\cU\-\cX$ then the following two statements hold:
\iz
\item[\bf(I)]
$V$ is a multiplicity-one non-singular varifold and $V$ restricted to $M\-B_x$ is $C^1$-close to $X;$
\item[\bf(II)]
there exists an open set $B_{x,V}\su B_x$ with the following two properties:
{\bf(i)} $V$ restricted to $B_{x,V}$ is $C^1$-close to $sL$ for some $s>0$ and $L\in\{L_1,L_2,L_3\};$
{\bf(ii)}
$V$ restricted to $B_x\-B_{x,V}$ is $C^1$-close to the tangent cone to $X$ at $x$.
\iz
\end{thm}
\begin{proof}
By Allard's regularity theorem we can take a neighbourhood $\cU$ of $X$ in $\cV$ and a neighbourhood $B_x$ of $x$ such that if $V\in\cU\-\cX$ then $V$ will satisfy the latter part of (I) above.

Near $x$ we can apply Theorem \ref{bubble-off} so that as $V$ approaches $X$ we can take its re-scaled limit $W$ with positive energy;
more precisely the proof of Theorem \ref{bubble-off} implies that we can take $y\in B_x,$ $\de>0$ and a special Lagrangian integral varifold $W$ with no boundary in $\C^3$ satisfying the following two properties:
\iz
\item[(a)]
$\cE\bigl(V\llcorner(B_x\-B_\de(y)\bigr)\le\ep/2$ where $B_\de(y)$ denotes a $\de$-ball about $y$ in $M$ and $\ep$ denotes a constant given by Theorem \ref{AAS};
\item[(b)]
if we identify $B_x$ with an open set in $\C^3,$ translate $V$ by $y\in\C^3$ and dilate it by $\de^{-1}$ then the resulting varifold $\de^{-1}(V-y)$ is close to $W$ in the varifold topology.
\iz
From (b) and Theorem \ref{uniqueness} it follows that $W$ is a multiplicity-one non-singular varifold represented by some $L\in\{L_1,L_2,L_3\}$ up to dilation and translation in $\R^6\cong T_xM$.
This combined with Allard's regularity theorem implies (i) above.

We have not seen yet what happens to $V$ in the annular region $B_x\-B_\de(y)$ but in that region $V$ has little energy as in (a) above.
This combined with Theorem \ref{AAS} implies (ii) above with $B_{x,V}=B_\de(y).$

Finally the former part of (I) follows from (i), (ii) and the latter part of (I).
\end{proof}

In Theorem \ref{bg} above we have frequently used the expression `$C^1$-close' for Lagrangian submanifolds, but in later sections we shall need to describe them in Weinstein neighbourhoods \cite{W} (a class of tubular neighbourhoods of Lagrangian submanifolds).
In what follows therefore we introduce some notation concerning Weinstein neighbourhoods and re-state Theorem \ref{bg} in that notation.

We take a linear isomorphism
$\ga:\R^6\to T_xM$
with $\ga^*g|_x=g_0$, $\ga^*J|_x=J|_0$,
$\ga^*\Om|_x=\Om_0$
and $\ga(C)$ a multiplicity $1$ smooth tangent cone
to $X$ at $x$.
We write $X':=X\-\{x\}$.

As in \S\ref{SL section} we define $\psi:M\to(0,\iy)$ by \eqref{psi}.
We have then $\om_0=\psi^2(x)\ga^*\om|_x$.
By Darboux's theorem we can find a real number $\de>0$ and an embedding $\Ga:B_\de\to M$ with $\Ga(0)=x$, $d\Ga|_0=\psi(x)\ga$ and $\Ga^*\om|_x=\om_0$.

We define $T^*C$, $\om_C$, $U_C$ and $\Phi_C$ as in Lemma~\ref{UC}.
Joyce \cite[Theorem~4.4 and Lemma~4.5]{J1} proves that making $\de>0$ smaller if necessary we can take an embedding $f_X$ of $C\cap B_\de$ into $X'$, a function $h_X:C\cap B_\de\to\R$ and an $\al>2$ with $h_X=O(r^\al)$ such that $f_X^{-1}(X')=\Phi_C(\gr dh_X)$.
We put $Z=X'\-f_X(C\cap\,\ov{\!B_\de})$.
It is clear that $Z$ is an open subset of $X'$ with boundary diffeomorphic to $T^2$.

We denote by $T^*X'$ the cotangent bundle over $X'$, by $0_C$ the zero-section of $T^*X'$, and by $\om_X$ the canonical symplectic form on $T^*X'$. Since $f_X$ maps $C\cap B_\de$ diffeomorphically onto $X'\-Z$ we get a vector-bundle isomorphism $T^*f_X:T^*(X'\-Z)\to T^*(C\cap B_\de)$
covering $f_X^{-1}:X'\- Z\to C\cap B_\de$.
Joyce~\cite[Theorem~4.6]{J1} constructs a neighbourhood $UX'$ of $\im0_X$ in $T^*X'$ and a diffeomorphism $\Phi_X$ of $UX'$ into $M$ with $\Phi_X\circ0_X=i_X$, $\Phi_X^*\om=\om_X$ and
\[\Phi_X|_{UX'\-T^*Z}
=\Ga\circ\Phi_C\circ(+dh_X)\circ T^*f_X\]
where $+dh_X$ denotes the fibrewise translation of $T^*C$
by $dh_X$.

For each $y\in M$
we denote by $P|_y$ the set of all
linear isomorphisms $\phi:\R^6\to T_yM$
with $\phi^*g|_y=g_0$,
$\phi^*J|_y=J_0$ and $\phi^*\Om|_y=\Om_0$.
We put $P=\bigcup_{y\in M}P|_y$.
It is clear that $P$ is a principal bundle over $M$
with structure group $SU_3$.
We have $(x,\ga)\in P$.
By a result of Joyce~\cite[Theorem~5.2]{J2}
we can take a neighbourhood $U_{x,\ga}$ of $(x,\ga)$ in $P$
such that for all $p=(y,\phi)\in U_{x,\ga}$
we can construct
embeddings $\Ga_p$ of $B_\de$ into $M$
depending smoothly on $p$ with
$\Ga_p(0)=y$, $d\Ga_p|_0=\phi$ and
$\psi^2(y)\Ga_p^*\om=\om_0$, and
embeddings $\Phi_p$ of $UX'$ into $M$
depending smoothly on $p$ with
$\Phi_p\circ0_X=i_X$, $\Phi_p^*\om=\om_X$ and
\[\Phi_p|_{UX'\- T^*Z}=\Ga_p\circ\Phi_C\circ(+dh_X)\circ T^*f_X.\]

We turn now to $L\in\{L_1,L_2,L_3\}$.
We take an open subset $K$ of $L$ with boundary diffeomorphic to $T^2$, a real number $R>0$ and a diffeomorphism $f_L$ of $C\-\,\ov{\!B_R}$ onto
$L\-\,\ov{\!K}$ with
$f_L-i_C=O(r^{-1})$.
Making $R>0$ larger if necessary
we can find a $1$-form $\eta_L$ on $C\-\,\ov{\!B_R}$
with $\Phi_C(\gr \eta_L)=L\- K$.

We denote by $T^*L$ the cotangent bundle over $L$,
by $0_L$ the zero-section of $T^*L$, and by $\om_L$
the canonical symplectic form on $T^*L$.
Since $f_L$ maps $C\-\,\ov{\!B_R}$ diffeomorphically onto $L\- K$
we get a vector-bundle isomorphism
$T^*f_L:T^*(L\- {K})\to T^*(C\-\,\ov{\!B_R})$
covering $f_L^{-1}:L\- K\to C\cap\,\ov{\!B_R}$.
Joyce \cite[Theorem 7.5]{J1} constructs a neighbourhood $UL$ of $\im0_L$ in $T^*L$ and a diffeomorphism $\Phi_L$ of $UL$ into $\R^6$ with $\Phi_L^*\om_0=\om_L$, $\Phi_L\circ0_L=i_L$ and
\e\l{eta}\Phi_L|_{UL\-T^*{K}}
=\Phi_C\circ(+\eta_L)\circ T^*f_L\e
where $+\eta_L$ denotes the fibrewise translation of $T^*C$ by $\eta_L$.

We are ready now to refine the statement of Theorem \ref{bg}:
\begin{thm}\l{BG}
For each $\ep>0$ there exists a neighbourhood $\cU$ of $X$ in $\cV$ such that if $V\in\cU\-\cX$ then $V$ will be a multiplicity-one non-singular varifold in $M$ and we can find some $p\in P,$ $s>0,$ $L\in\{L_1,L_2,L_3\},$ a closed $1$-form $\be_C$ on $C\cap A_{sR,\de}$ with $|\be_C|^{1,\cyl}_{[sR,\de]}<\ep,$ a closed $1$-form $\be_L$ on $\Kh$ with $|\be_L|_{C^1(\widehat{K})}<\ep,$ and a closed $1$-form $\be_X$ on $\widehat{Z}$ with $|\be_X|_{C^1(\widehat{Z})}<\ep$ such that on $f_L^{-1}(\widehat{K})\cap A_{sR,\de}$ we have $s^2(f_L^*\be_L+\eta_L)=\be_C,$ on $f_X^{-1}(\widehat{Z})\cap A_{sR,\de}$ we have $f_X^*\be_X+dh_X=\be_C,$ and
\e\l{V}V=\Ga_p\bigl(s\Phi_L(\gr\be_L)\bigr)
\cup\Ga_p\circ\Phi_C(\gr\be_C)
\cup\Phi_X(\gr\be_X).\e
\end{thm}
\begin{rem*}
Here and subsequently if $V$ is a multiplicity-one varifold in $M$ then we shall treat $V$ as a subset of $M$ to simplify the notation as in \eq{V}.

The right-hand side of \eq{V} defines a submanifold of $M$ because of the two conditions $s^2(f_L^*\be_L+\eta_L)=\be_C$ 
on $f_L^{-1}(K)\cap A_{sR,\de}$ and
$f_X^*\be_X+dh_X=\be_C$
on $f_X^{-1}(Z)\cap A_{sR,\de}$.
\end{rem*}

\section{Remarks on Joyce's Work}
This section will be devoted to several remarks on Joyce's work \cite{J1,J2,J3,J4,J5}.

We begin in \S\ref{cd} by recalling Joyce's topological condition \cite[Theorem 10.4]{J5} which is necessary for his gluing construction and also for the precise statement of Theorems \ref{main2} and \ref{main3}, the main theorems of the present paper.

In \S\ref{pr2} we prove Theorem \ref{main2}, which claims that if there is no $L\in\{L_1,L_2,L_3\}$ satisfying Joyce's topological condition then $X$ is {\it unsmoothable}, which is a corollary to Theorem \ref{BG}.

On the other hand Theorem \ref{main3} supposes that there is some $L\in\{L_1,L_2,L_3\}$ satisfying Joyce' topological condition and claims that there exists a neighbourhood $\cU$ of $X$ in $\cV$ such that the elements of $\cU\-\cX$ may be obtained by Joyce's gluing construction.
Its proof will be given in \S\ref{pr3} below after the preparation in \S\S\ref{Top X}--\ref{gluing}.

In \S\ref{Top X} we prove that the varifold topology on $\cX$ induced from $\cV$ is equal to a stronger topology defined by Joyce \cite[Definition 5.6]{J2}.
This result may be of independent interest as an improvement of Joyce's work in the second paper \cite{J2}.

In \S\ref{st C} we recall from Joyce's second paper \cite[Corollary 6.11]{J2} that $\cX$ is a {\it manifold} of finite dimension, which is a consequence of the fact that the $T^2$-cone $C$ is {\it stable} in the sense of Joyce \cite[\S3.2]{J2}. Our presentation will be slightly different from that of Joyce, but it is superficial and we shall only re-phrase Joyce's statement \cite[Theorem 6.10]{J2} so that we may use it directly in \S\ref{pr3}.

In \S\ref{gluing} we give an explicit description of the gluing map $G:[0,\tau)\t\cY\to\cV$ given in \S\ref{intro}.

\subsection{Joyce's Topological Condition}\l{cd}
Our main results concern the topological condition given by Joyce \cite[Theorem 10.4]{J5} which we therefore recall now.
Let $f_X:C\cap B_\de\to X'$ be as in \S\ref{bg1} above, which induces a linear map
\e f_X^*:H^1(X';\R)\to H^1(C\cap B_\de;\R)\cong H^1(T^2;\R)\e
between cohomology groups.
Its image will be denoted by $\im f_X^*$, which is a linear subspace of $H^1(T^2;\R)$.
Now we use:
\begin{lem*}[Joyce {\cite[Lemma 10.1]{J5}}]
{\rm
Let $X^\dag$ be a compact orientable $3$-dimensional manifold with boundary, and consider the natural restriction map $r^\dag:H^1(X^\dag;\R)\to H^1(\d X^\dag;\R)$ and its image $\im r^\dag$ in $H^1(\d X^\dag;\R)$.
Then we have
\[\dim_\R\im r^\dag=\ts\ha\dim_\R H^1(\d X^\dag;\R)\]
where $\dim_\R H^1(\d X^\dag;\R)$ is always an even integer as $\d X^\dag$ is a compact orientable $2$-dimensional manifold.
}
\end{lem*}
In our case $X'$ may be retracted to some $X^\dag$ with $\d X^\dag\cong T^2$ in the notation above, and so
\begin{cor*}
$\dim_\R\im f_X^*=\ts\ha\dim_\R H^1(T^2;\R)=1$.
\end{cor*}

On the other hand let $L\in\{L_1,L_2,L_3\}$ and define a closed $1$-form $\eta_L$ as in \S\ref{eta}.
Then we can define its de Rham cohomology class
\e Y(L):=[\eta_L]\in H^1(C\-\,\ov{\!B_R};\R)\cong H^1(T^2;\R)\e
which is compatible with the notation of Joyce \cite[Definition 6.2 (see also Theorem 6.6)]{J5}.
Joyce {\cite[Equation (77)]{J5}} proves indeed:
\begin{lem}\l{idp}
{\rm
$Y(L)\ne0$ for each $L\in\{L_1,L_2,L_3\}$ and moreover any two of $\{Y(L_1),Y(L_2),Y(L_3)\}$ are {\it linearly independent} as vectors in $H^1(T^2;\R)$.
}
\end{lem}

Now let $\lg Y(L)\rg$ denote the $1$-dimensional linear subspace of $H^1(T^2;\R)$ generated by $Y(L)$.
Then Joyce's topological condition \cite[Theorem 10.4 (see also Theorem 7.3)]{J5} is equivalent to:
\begin{cdt}\l{JC}
$\lg Y(L)\rg=\im f_X^*\su H^1(T^2;\R)$.
\end{cdt}

Thus there are four lines $\lg Y(L_1)\rg,\lg Y(L_2)\rg,\lg Y(L_3)\rg$ and $\im f_X^*$ in the plane $H^1(T^2;\R)\cong\R^2$, all passing thorough the origin $0\in H^1(T^2;\R)$.
The three lines $\lg Y(L_1)\rg,\lg Y(L_2)\rg,\lg Y(L_3)\rg$ are all distinct by Lemma \ref{idp}, and Condition \ref{JC} is equivalent to the fourth line $\im f_X^*$ overlapping one of those three lines.

It is now clear that there exists {\it at most} one $L\in\{L_1,L_2,L_3\}$ satisfying Condition \ref{JC}, which we have mentioned in Lemma \ref{j}.

\subsection{Proof of Theorem \ref{main2}}\l{pr2}
We are ready now to prove Theorem \ref{main2} as a corollary to Theorem \ref{BG}:
\begin{cor}\l{gluing condition}
Suppose that for every neighbourhood $\cU$ of $X$ in $\cV$
we have $\cU\-\mathcal{X}\neq\emptyset$.
Then there exists $L\in\{L_1,L_2,L_3\}$
with $\lg Y(L)\rg=\im f_X^*$.
\end{cor}
\begin{rem*}
This statement is the contraposition of Theorem \ref{main2} and so they are equivalent.
\end{rem*}
\begin{proof}[Proof of Corollary $\ref{gluing condition}$]
Theorem~\ref{BG} and the hypothesis above imply that for each $\ep>0$ there exist $L\in\{L_1,L_2,L_3\}$, a real number $s>0$, a closed $1$-form $\be_L$ with $|\be_L|_{C^1(\Kh)}<\ep$, and a closed $1$-form $\be_X$ on $\Kh,\Xh$ respectively such that
\[s^2[f_L^*\be_L+\eta_L]=[f_X^*\be_X+dh_X]\in H^1(T^2;\R)\]
and so $[f_L^*\be_L]+Y(L)\in\im f_X^*$.
Here $L$ depends on $\ep$ but letting $\ep=1/n,\,n=1,2,3,\cdots,$ and taking a subsequence we can make $L$ independent of $n$ and satisfying the following property: for infinitely many $n$ there exist $1$-forms $\be_{L,n}$ on $\Kh$ with $|\be_{L,n}|_{C^1(\Kh)}<1/n$ and $[f_L^*\be_{L,n}]+Y(L)\in\im f_X^*$.
Consequently letting $n\to\iy$ we get $Y(L)\in\im f_X^*$.

On the other hand $\lg Y(L)\rg$ and $\im f_X^*$ are both $1$-dimensional as in \S\ref{cd}, and so $\lg Y(L)\rg=\im f_X^*$, which completes the proof of Corollary \ref{gluing condition}.
\end{proof}

We can also strengthen Theorem \ref{BG} as follows: 
\begin{cor}\l{BG'}
{\rm
If there exists $L\in\{L_1,L_2,L_3\}$ with $\lg Y(L)\rg=\im f_X^*$
then such an $L$ is unique as proven in \S\ref{cd} and
{\it the statement of Theorem $\ref{BG}$ holds for that unique $L$ instead of assigning $L_1,L_2$ or $L_3$ to each $V\in\cU\-\cX$}.
}
\end{cor}
\begin{proof}
In a way similar to the proof of Corollary \ref{gluing condition} it follows that in the statement of Theorem $\ref{BG}$ we may suppose that $\lg Y(L)\rg$ is $\ep$-close to $\im f_X^*$ but such an $L$ is unique as in \S\ref{cd}, which completes the proof of Corollary \ref{BG'}.
\end{proof}

\subsection{Topology on $\cX$}\l{Top X}
We have used so far the varifold topology induced from $\cV$ but in what follows we prove that it is actually equal to a stronger topology given by Joyce \cite[Definition 5.6]{J2}.
We begin with:
\begin{lem}\l{zeta}
There exists a neighbourhood $\cY$ of $X$ in $\cX$ such that for each $Y\in\cY$ there exist unique $p(Y)=(y,\phi)\in P$ and a closed $1$-form $\eta_Y$ on $X'$ with
\[Y=\Phi_{p(Y)}(\gr\eta_Y)\]
where $P,\Phi_{p(Y)}$ are as in \S\ref{bg1};
moreover near the singular point $y$ in $Y$ we may write $\eta_Y=dh_Y$ for some unique function $h_Y:X'\to\R$ decaying with any rate $<3$.
\end{lem}
\begin{rem}\l{rate}
If $X$ were not modelled on the $T^2$-cone $C$ then the rate of $h_Y$ would be confined as in the definition of Joyce \cite[Definition 3.7]{J2}.
\end{rem}
\begin{proof}[Proof of Lemma $\ref{zeta}$]
By Allard's regularity theorem we may suppose that $Y$ is $C^1$-close to $X$ outside a neighbourhood of $x$ in $X$. By Theorem \ref{X top} we may also suppose that the singular point $y$ of $Y$ is close to $x$ and the tangent cone to $Y$ at $y$ is close to that to $X$ at $x$.
These two facts readily imply the existence of $p(Y)=(y,\phi),\eta_Y$ and the decay property of $\eta_Y$, as proven by Joyce \cite[Theorem 5.3]{J2}.
The uniqueness of $p(Y)=(y,\phi)$ follows from the definition of $P$, and the uniqueness of $\eta_Y$ follows from the fact that $\Phi_{p(Y)}^{-1}(Y\-\{y\})$ is $C^1$-close to $X':=X\-\{x\}$, which completes the proof of Lemma \ref{zeta}.
\end{proof}
Joyce \cite[Definition 5.6]{J2} defines a topology on $\cX$ dependent on $\mu\in(2,3)$ in which $Y\in\cX$ is close to $X$ if $p(Y)=(y,\phi)$ is close to $(x,\ga)$ in $P$ and $\eta_Y$ is small in the weighted $C^1$-space $C^1_\mu(T^*X')$ where $p(Y)=(y,\phi)$ and $\eta_Y$ are as in Lemma \ref{zeta} above. We prove:
\begin{thm}\l{same}
Joyce's topology is independent of $\mu$ and moreover equal to the varifold topology induced from $\cV$.
\end{thm}
\begin{proof}
It is clear that if $X,Y\in\cX$ are close in the $\mu$-topology then so they are in the varifold topology.
Conversely let $X,Y\in\cX$ be close in the varifold topology.
Allard's regularity theorem implies that $Y$ is $C^1$-close to $X$ outside a neighbourhood of $x$ in $X$ where $x$ denotes the singular point of $X$.
We may also write $Y$ as in Lemma \ref{zeta} and Theorem \ref{X top} implies that $p(Y)=(y,\phi)$ is close to $(x,\ga)$ in $P$.
The last part of Lemma \ref{zeta} implies that $Y$ approaches $X$ at $x$ with any rate $\mu<3$, which completes the proof of Theorem \ref{same}.
\end{proof}

Theorem \ref{same} readily extends to compact special Lagrangian $m$-folds of $X$ with isolated conical singularities in the sense of Joyce \cite{J1,J2,J3,J4,J5} where $m$ is an arbitrary integer $>2$ and the tangent cones to $X$ need not be modelled on the stable $T^2$-cone $C.$

\subsection{Consequence of Stability of $C$}\l{st C}
In \S\ref{pr3} we use the fact that $X$ is modelled on the {\it stable} cone $C$ in the sense of Joyce \cite[\S3.2]{J2} who proves indeed (in Corollary 6.11 in the same paper) that a neighbourhood of $X$ in $\cX$ is a {\it manifold} of finite dimension and its tangent space $T_X\cX$ is isomorphic to the compactly-supported de Rham cohomology group $H^1_c(X';\R)\su H^1(X';\R)$.

Here $H^1_c(X';\R)$ is embedded in $H^1(X';\R)$ because the map $f_X^*:H^0(X';\R)\to H^0(T^2;\R)$ is surjective.
It is unnecessary for our purpose but may be worth remarking that Joyce \cite{J2} deals with a more general case where $X$ has two or more singular points, and then $T_X\cX$ is isomorphic to the image of the canonical map $H^1_c(X';\R)\to H^1(X;\R)$ which need not be injective.

We shall need in \S\ref{pr3} a more detailed statement of Joyce's result above, which we therefore recall now.
We take a neighbourhood $\cY$ of $X$ in $\cX$ and define a map $H:\cY\to H^1_c(X';\R)$ as follows.
Let $\cY$ be as in Lemma \ref{zeta} so that for each $Y\in\cY$ there exist unique $p(Y)=(y,\phi)\in P$ and a closed $1$-form $\eta_Y$ on $X'$ with
\[Y=\Phi_{p(Y)}(\gr\eta_Y).\]
Consider the de Rham cohomology class $[\eta_Y]\in H^1(X';\R)$, which maps to $0$ under $f_X^*$ because of the last property in Lemma \ref{zeta}.
Consequently $[\eta_Y]$ lies in $H^1_c(X';\R)$, and we set $H(Y)=[\eta_Y]$.

With the notation above we can state the detailed version of Joyce's result \cite[Theorem 6.10]{J2}:
\begin{thm}\l{H}
{\rm
$H$ maps a neighbourhood of $X$ in $\cX$ {\it homeomorphically} onto a neighbourhood of $0$ in $H^1_c(X';\R)$.
}
\end{thm}

\subsection{Explicit Description of Gluing Map}\l{gluing}
Finally we give an explicit description of $G:[0,\tau)\t\cY\to\cV$.
The basic notation is already given in \S\ref{bg1} but we shall need some more notation.

We denote by $\eta_L'$ the harmonic $1$-form on $T^2$ with $[\eta_L']=Y(L)\in H^1(T^2;\R)$ and define a $1$-form $\etati_L$ on $C$ by setting $\etati_L:=\pi^*\eta_L'$ where $\pi$ denotes the projection of $C\cong (0,\iy)\t T^2$ onto $T^2$.

Since $\lg Y(L)\rg=\im f_X^*$ it is clear that there exists a closed $1$-form $\xi_L$ on $X'$ with $f_X^*[\xi_L]=Y(L)\in H^1(T^2;\R)$.
Following the proof of Joyce (sketched in \cite[Theorem 7.3]{J5} and completed in \cite[Theorem 6.12]{J4}) we find that for a suitable choice of $\xi_L$ the following statement holds:
\begin{thm}\l{zeta gluing}
For all $(t,Y)\in(0,\tau)\t\cY$ there exist three smooth functions $u_L:K\to\R,\,u_C:C\cap A_{tR,\de}\to\R,\,u_X:Z\to\R$
such that letting $p(Y),\eta_Y$ be as in Lemma $\ref{zeta}$ we can construct a compact special Lagrangian submanifold of $(M,\om,J,\Om)$ of the form
\[
G(t,Y):=\Ga_{p(Y)}\bigl(t\Phi_L(\gr du_L)
\cup\Phi_C(\gr t^2\etati_L+du_C)\bigr)\cup\Phi_X(\gr t^2\xi_L+\eta_Y+du_X).
\]
Moreover $|u_L|_{C^1(K)},$ $|u_C|_{[tR,\de]}^{1,\cyl},$ $|u_X|_{C^1(Z)}$ tend to $0$ uniformly in $Y$ as $t\to+0$.
\end{thm}

\section{Proof of Theorem \ref{main3}}\l{pr3}
As in \S\ref{gluing} above we suppose that there is some $L\in\{L_1,L_2,L_3\}$ satisfying Joyce's topological condition and define the gluing map $G:[0,\tau)\t\cY\to\cV$.
Theorem \ref{main3} claims that for $\tau$ and $Y$ small enough $G$ is a {\it homeomorphism} onto a neighbourhood of $X$ in $\cV$.

To prove it we shall construct an inverse map to $G$.
We shall indeed take a neighbourhood $\cU$ of $X$ in $\cV$, then define a map $F_1:\cU\to[0,\iy)$, then define a map $F_2:\cU\to\cX$ and then set $F=(F_1,F_2)$, to get a map $F:\cU\to[0,\iy)\t\cX$ inverse to $G$.

In Donaldson's situation \cite{D} (explained also by Freed and Uhlenbeck \cite{FU}) in Yang--Mills gauge theory there is a map similar to $F:\cU\to[0,\iy)\t\cX$ and the elements of $\cU$ play the r\^ole of `concentrated' instantons. There is a map similar to $F_2:\cU\to\cX$ which assigns the `centres' about which those instantons are concentrated, and there is also a map similar to $F_1:\cU\to[0,\iy)$ which measures how concentrated they are. 

We return now to our situation.
By Allard's regularity theorem there exists a neighbourhood $\cU$ of $X$ in $\cV$ such that for each $V\in\cU$ we can take a unique closed $1$-form $\be_X$ on $Z$ such that
\e\l{V-}V\cap\Phi_X(UX'\cap T^*Z)=\Phi_X(\gr\be_X)\e
where by Theorem \ref{BG} we may suppose that $V$ is a {\it multiplicity-one} varifold (with singularity at most one point) and so we may treat $V$ as a subset of $M$.

Consider the de Rham cohomology class $[\be_X]\in H^1(Z;\R)\cong H^1(X';\R)$. Then by Condition \ref{JC} we may write $f_X^*[\be_X]=rY(L)$ for some unique $r\in\R$. 

We claim that $r\ge0$.
If $V\in\cU\cap\cX$ then near $x$ we may write $V$ as the graph of an {\it exact} $1$-form as in Lemma \ref{zeta} and so $f_X^*[\be_X]=0$ in the notation above, which implies $r=0$.

If $V\in\cU\-\cX$ then by Corollary \ref{BG'} there exist a real number $s>0$, a closed $1$-form $\be_X$ on $Z$, and a closed $1$-form $\be_L$ on $K$ such that
$[f_X^*\be_X]=s^2\bigl([f_L^*\be_L]+Y(L)\bigr)\in H^1(T^2;\R)$.
Hence recalling that $\im f_X^*=\lg Y(L)\rg$ we find $[f_L^*\be_L]=cY(L)$ for some $c\in\R$.
Making $\cU$ smaller if necessary we may suppose that $\be_L$ is so small that $|c|<\frac{1}{100}$ and then $r=s^2(1+c)>0$ as we want.

Thus we have defined a map $F_1:\cU\to[0,\iy)$ and we turn now to the definition of $F_2:\cU\to\cX$.

In Theorem \ref{zeta gluing} we have taken a closed $1$-form $\xi_L$ on $X'$ with $f_X^*[\xi_L]=Y(L)$ and so $[\be_X-r\xi_L]\in\Ker f_X^*=H^1_c(X';\R)$.
We may suppose that $[\be_X-r\xi_L]$ is so small that by Theorem \ref{H} we can define $F_2(V):=H^{-1}[\be_X-r\xi_L]\in\cY$.

Thus we have defined a map $F=(F_1,F_2):\cU\to[0,\iy)\t\cX$, which readily satisfies the following two properties:
\iz
\item[(i)]
$F(Y)=(0,Y)$ for $Y\in\cU\cap\cX$;
\item[(ii)]
$F\ci G$ is the identity map of a neighbourhood of $(0,X)$ in $[0,\iy)\t\cX$.
\iz
We can also prove:
\begin{lem}\l{vC}
{\rm
There exists a neighbourhood $\cU$ of $X$ in $\cV$ such that if $V\in\cU\-\cX$ then $V$ may be written as the graph of an {\it exact} $1$-form $\al$ on $G\ci F(V)$
in a Weinstein neighbourhood of $G\ci F(V)$ in the symplectic manifold $(M,\om)$.
}
\end{lem}
\begin{rem}\l{VC}
The definitions of $F$ and $G$ readily imply that $V$ and $G\ci F(V)$ are {\it $C^1$-close} to each other. On the other hand $V$ and $G\ci F(V)$ are both {\it Lagrangian} submanifolds of $(M,\om)$.
Consequently $V$ may be written as the graph of a {\it closed} $1$-form $\al$ on $G\ci F(V)$.
Thus the substantial part of Lemma \ref{vC} is the exactness of the $1$-form $\al$.
\end{rem}
\begin{proof}[Proof of Lemma $\ref{vC}$]
We proceed in two steps:
the first step introduces some notation to define $\al;$ and the second step proves that $\al$ is exact.
\subsubsection*{First Step}
Let $\cU$ be a neighbourhood of $X$ in $\cV$, let $V\in\cU\-\cX$ and write $F(V)=(t,Y)\in(0,\iy)\t\cX$.
Making $\cU$ smaller if necessary we may apply Lemma \ref{zeta} to $Y$ and in particular we can define $p:=p(Y)\in P$ as in Lemma \ref{zeta}.

Making $\cU$ smaller if necessary we may also apply Corollary \ref{BG'} to $V$.
It is also easy to see that \eq{V} holds with $t$ in place of $s$ and with $p$ as above; i.e. there exist a closed $1$-form $\be_X$ on $Z$, a closed $1$-form $\be_C$ on $C\cap A_{tR,\de}$, and a closed $1$-form $\be_L$ on $K$ such that
\e V=\Ga_p\bigl(t\Phi_L(\gr\be_L)\bigr)
\cup t\Phi_C(\gr\be_C)\bigr)
\cup\Phi_X(\gr\be_X).\e

We take now a Weinstein neighbourhood of $V$ in $(M,\om)$ as follows.
Define a map $f_{LV}:K\to V$ by setting
\[f_{LV}:=\Ga_p\ci t\Phi_L\ci\be_L\]
where $\be_L$ is regarded as a map $K$ into $UL\cap T^*K$.
Likewise we can define two maps $f_{CV}:C\cap A_{tR,\de}\to V$ and $f_{XV}:Z\to V$ by setting
\[f_{CV}:=\Ga_p\ci t\Phi_C\ci\be_C,\,
f_{XV}:=\Phi_X\ci\be_X\]
respectively.
Differentiating $f_{LV}:K\to V$ we get a vector-bundle homomorphism $f_{LV!}:T^*K\to T^*V$ covering $f_{LV}:K\to V$.
Likewise we can also define the two maps $f_{CV!}$ and $f_{XV!}$.
Let $UV$ be a neighbourhood of the zero-section in $T^*V$ such that we can define a map $\Phi_V:UV\to M$ by
\[\Phi_V\ci f_{LV!}=\Ga_p\ci t\Phi_L\ci(+\be_L),\,
\Phi_V\ci f_{CV!}=\Ga_p\ci t\Phi_C\ci(+\be_C)\text{ and }
\Phi_V\ci f_{XV!}=\Phi_L\ci(+\be_X)\]
which defines a Weinstein neighbourhood of $V$ in $(M,\om)$.

By Theorem \ref{zeta gluing} we may write $G\ci F(V)=\Phi_V(\gr\al)$ for some unique closed $1$-form $\al$ on $V$.

\subsubsection*{Second Step}
It remains to prove that $[\al]=0$ in the de Rham cohomology group $H^1(V;\R)$.
The co-efficient field $\R$ will be omitted in what follows for the sake of brevity.
We may topologically write $V$ as a union of $K$ and $Z$ the intersection of which is diffeomorphic to $T^2\t\R$.
We consider the associated Mayer--Vietoris exact sequence.
The map $H^0(K)\op H^0(Z)\to H^0(T^2)$ is clearly surjective, and so the map $H^1(V)\to H^1(K)\op H^1(Z)$ is injective.

It suffices therefore to prove that $[\al]\in H^1(V)$ maps to $0\in H^1(K)\op H^1(Z)$.
We begin by considering its image in $H^1(Z)$.
By the definition of $F$ we have $[\be_X]=t^2Y(L)+[\eta_{F_2(V)}]\in H^1(Z)$ where $\eta_{F_2(V)}$ is as in Lemma \ref{zeta} with $F_2(V)$ in place of $Y$.
On the other hand by the construction of $\Phi_V:UV\to M$ we have
\[\be_X+f_{XV}^*\al=t^2\xi_L+\eta_{F_2(V)}+du_L\text{ on }Z\]
in the notation of Theorem \ref{zeta gluing}.
Consequently $f_{XV}^*[\al]=0\in H^1(Z)$ as we want.

We also prove that $[\al]$ maps to $0$ under the map $H^1(V)\to H^1(K)$, which is induced by $f_{LV}$.
We can naturally compactify $K$ into a manifold with boundary diffeomorphic to $T^2$, which induces a map $H^1(K)\to H^1(T^2)$.
Its kernel is isomorphic to the compactly-supported cohomology group $H^1_c(T^2)$.
Since $K$ is diffeomorphic to $S^1\t\R^2$ it follows that $H^1_c(S^1\t\R^2)\cong H^2(S^1\t\R^2)=\{0\}$
and so the map $H^1(K)\to H^1(T^2)$ is injective.

It suffices therefore to prove that $[\al]\in H^1(V)$ maps to $0\in H^1(T^2)$ under the composite map $H^1(V)\to H^1(K)\to H^1(T^2).$
Recall that it is contained in the Mayer--Vietoris sequence
\[H^1(V)\to H^1(K)\op H^1(Z)\to H^1(T^2)\]
and in particular that the two composite maps $H^1(V)\to H^1(K)\to H^1(T^2)$ and $H^1(V)\to H^1(Z)\to H^1(T^2)$ are equal.
On the other hand we have already proved that $[\al]\in H^1(V)$ maps to $0\in H^1(Z)$ and so to $0\in H^1(T^2)$ as we want, which completes the proof of Lemma \ref{vC}.
\end{proof}

As a corollary to Lemma \ref{vC} we can prove:
\begin{cor}\l{sur}
$G\ci F$ is the identity map of a neighbourhood of $X$ in $\cV$.
\end{cor}
\begin{proof}
It suffices to prove that $V=G\ci F(V)$ in the situation of Lemma \ref{vC}.
Write the exact $1$-form $\al$ as $df$ for some smooth function $f:G\ci F(V)\to\R$.
Since $V$ and $G\ci F(V)$ are special Lagrangian it follows that $f$ satisfies Hopf's maximum principle \cite{Hopf} and so $f$ is constant as $V$ is compact.
Consequently $df=0$ and so $V=G\ci F(V)$ as we want.
\end{proof}

Theorem \ref{main3} is now an immediate consequence of Corollary \ref{sur} and the property (ii) stated before Lemma \ref{vC}.


\end{document}